\newtheorem{theorem}{Theorem}[section]
\newtheorem{lemma}[theorem]{Lemma}
\newtheorem{definition}[theorem]{Definition}
\newtheorem{corollary}[theorem]{Corollary}
\newtheorem{proposition}[theorem]{Proposition}
\newtheorem{remark}[theorem]{Remark}
\newtheorem{remarks}[theorem]{Remarks}
\newtheorem{example}[theorem]{Example}
\newcommand{\Matrix}[1]{\ensuremath{\left[\begin{array}{ccccccccccccccccccccccccr} #1 \end{array}\right]}}
\newcommand{\ii}{\mathrm{i}}
\newcommand{\Z}{\mathbb{Z}}
\newcommand{\R}{\mathbb{R}}
\newcommand{\sq}{\mbox{\footnotesize $\frac{1}{4}$}}
\newcommand{\D}{\mathbb{D}}
\newcommand{\shf}{\mbox{\footnotesize $\frac{1}{2}$}}
\newcommand{\tq}{\mbox{\footnotesize $\frac{3}{4}$}}
\newcommand{\Sone}{{\bf S}^1}
\newcommand{\ee}{{\mathrm e}}
\newcommand{\beqn}{\begin{eqnarray*}}
\newcommand{\eeqn}{\end{eqnarray*}}
\newcommand{\GG}{{\mathcal G}}
\newcommand{\Ha}{\mathbb{H}}
\newcommand{\hd}{\mathcal{H}}
\newcommand{\tl}{\mathcal{T}}
\newcommand{\C}{\mathbb{C}}
\newcommand{\DD}{{\mathrm D}}
\newcommand{\ONE}{{\bf 1}}
\newcommand{\Fix}{\mbox{{\rm Fix}}}
\newcommand{\ES}{\mathbb{S}}
\newcommand{\oo}{\mathbb{O}}
\newcommand{\dd}{\mathrm{d}}
\newcommand{\UU}{{\mathcal U}}
\newcommand{\CC}{{\mathcal C}}
\title{Hopf Bifurcation and Phase Patterns\\in Symmetric Ring Networks}
\author{Ian Stewart \\ Mathematics Institute
\\ University of Warwick \\ Coventry CV4 7AL
\\ United Kingdom}
\date{\today}
\begin{document}
\maketitle

\begin{abstract}
Systems of ODEs coupled with the topology of a 
closed ring are common models in biology,
robotics, electrical engineering, and many other areas of science. 
When the component systems and couplings are identical, 
the system has a cyclic symmetry group
for unidirectional rings and a dihedral symmetry group
for bidirectional rings. Hopf bifurcation in equivariant and network dynamics  
predicts the generic occurrence of periodic discrete rotating waves whose phase patterns
are determined by the symmetry group. We review basic aspects of the theory
in some detail and derive general properties of such rings. New results are obtained
characterising the first bifurcation
for long-range couplings and the direction in which
discrete rotating wave states rotate.
\end{abstract}

\section{Introduction}

Oscillatory processes, in which the same behaviour repeats periodically as time
passes, are widespread in the natural world. Examples in the life sciences
include the motion of animals \cite{BG01,GW85,GS73,Sel88}, 
breathing \cite{ACUC16}, the heartbeat \cite{QHGW14}, 
peristalsis in the gut \cite{F08,LTD76}, the sleep-wake cycle \cite{AK19},
networks of neurons \cite{AB89,BH87,D70,FS78,GY79,H86,MSe13,MY72,PS70,RKB20,Sz65},  
gene regulatory networks \cite{BP10,EL00}, and biological development \cite{T52}. 
Such processes have been modelled extensively as networks of oscillators  \cite{A86,AEKV23,EM78,E85,KOW07,KLL12,KM81,PYZ14,PYPT10,PC16,WSMR13,YM08}.
The term `oscillator' tacitly assumes that the component subsystems 
must be capable of sustaining oscillatory dynamics in isolation. We do not 
make this assumption, which is not necessary for most networks, 
so we refer to these subsystems as {\em nodes}.

Similar periodic patterns also arise in the physical sciences, 
for instance robotics \cite{I08,IKLNPB21}, electronic engineering \cite{E11,LC11,DA08}, 
Josephson junction arrays \cite{AGM91}, and celestial mechanics \cite{M81,MS13,S96}.
Often the model equations are based on detailed features of the
underlying physics; for example planetary orbits are modelled using
Newton's laws of gravity and motion. The equations can also be
simplified `toy' models that illustrate mathematical 
features of the physics, such as the `kicked oscillator' model for
Arnold tongues in resonance \cite{GH83}.

A feature of some of these models is the occurrence of {\em phase patterns},
in which distinct nodes have the same time-periodic
waveform subject to a phase shift. Often these phase shifts are simple fractions
of the period, regularly spaced round the ring. We call such a state a
{\em discrete rotating wave}, or a {\em standing wave} when all 
oscillators are synchronised. Other terms in the literature
are {\em rosette} \cite{H86} and {\em ponies on a merry-go-round} \cite{AGM91}.
We discuss the relation between phase patterns and symmetries of the equations
using equivariant and network dynamics and bifurcation theory.

\subsubsection{Hopf Bifurcation}

In this paper we consider networks of
coupled continuous-time dynamical systems (ordinary differential equations, ODEs).
For simplicity we restrict to state variables in real vector spaces, but much of the
theory extends to manifolds \cite{AF10a,AF10b,DL15}. 

Periodic oscillations in a dynamical system can arise through a variety of mechanisms.
A common one is Hopf bifurcation, in which an equilibrium loses
stability as some parameter varies, generating a limit cycle. A necessary
condition is that the linearised equation has nonzero purely imaginary eigenvalues
$\pm \ii \omega$, where $0 \neq \omega \in \R$. Such eigenvalues are said to be {\em critical}.
With additional `nondegeneracy' conditions (simplicity of this eigenvalue,
 no other imaginary eigenvalues, and the `eigenvalue crossing' condition)
 the Hopf Bifurcation Theorem guarantees
the existence of a unique bifurcating branch of periodic solutions \cite{HKW81}.
These conditions are generic in general dynamical systems, but not for symmetric
dynamical systems \cite{GSS88} or those with network structure \cite{GS23}.
Nonetheless, useful analogues of the Hopf Bifurcation Theorem hold in these contexts.
We therefore employ the term `Hopf bifurcation' in a loose sense: the
occurrence of purely imaginary critical eigenvalues.
In this paper we focus on oscillations generated by Hopf bifurcation
in this sense.

\subsubsection{Phase Patterns}

A network system has distinguished observables, namely
the states of its nodes, and these can be compared to each other.
In many cases the oscillations exhibit clear patterns in which
the phases of distinct component units are related in specific ways.
Biological examples include
quadruped gaits \cite{BG01,CS93b,CS94,GW85,GS73}, such as the walk, in which
the limbs typically move in the sequence
\[
\mbox{left rear}\ \to\ \mbox{left front}\ \to\ \mbox{right rear}\ \to\ \mbox{right front}\ \to \cdots
\]
and each limb moves one quarter of the period after the previous one.
Figure~\ref{F:ele_walk_outline} illustrates this pattern in the walk of an elephant.

\begin{figure}[h!]
\centerline{%
\includegraphics[width=0.9\textwidth]{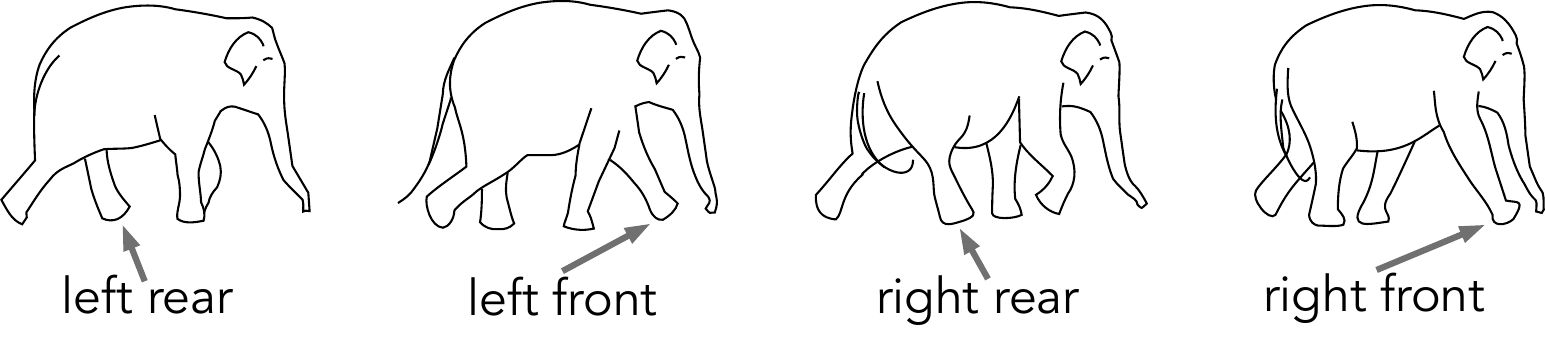}
}
\caption{Walk of an elephant at $\sq$-period intervals.}
\label{F:ele_walk_outline}
\end{figure}

Central configurations in celestial mechanics \cite{M81,MS13,S96} are a physical example.
Suppose that four identical point masses (planets) revolve 
round a central point mass (star), so that the planets follow a circular orbit in the plane 
and lie at the vertices of a uniformly rotating square. Such a state is consistent with
Newtonian gravitation for a suitable choice of the mass and angular velocity.
This state has the same spatiotemporal symmetries as the walk gait: successive planets follow identical orbits with a phase shift of one quarter of the period. Another example in this context is the occurrence of {\em choreographies}, 
such as three identical masses moving along the same figure-8 shaped orbit \cite{CM00}.
An important mathematical difference here is that the equations
of celestial mechanics are Hamiltonian, but analogous results apply \cite{MRS88,MRS90}.
Another is that the centre of mass remains stationary in central configurations;
this happens because the gravitational forces acting on the star 
combine through vector addition, so they cancel out. Such cancellations are common
when interactions are additive.

\subsubsection{Networks and Ring Topology}

A {\em network of coupled ODEs} is a directed graph in which
each node corresponds to a component ODE, and each
arrow (directed edge) indicates that the state of the
node at the head of the arrow is influenced by that of the node at its tail \cite{GS23,GST05,SGP03}. 
In this paper we focus on dynamical systems in which several
subsystems are coupled together with the topology or symmetry of a closed ring. 
Figure \ref{F:rings} illustrates the two main types of ring for identical components with
identical {\em nearest-neighbour} (NN) coupling. We draw each node as a circle and each directed edge as an arrow,
with double-headed arrows for bidirectional coupling. It is convenient to number nodes 
with integers $\{0,1, \ldots, n-1\}$ modulo $n$, where $n$ is the number of nodes
and nodes are numbered consecutively in the anticlockwise direction.

\begin{figure}[h!]
\centerline{%
\includegraphics[width=0.5\textwidth]{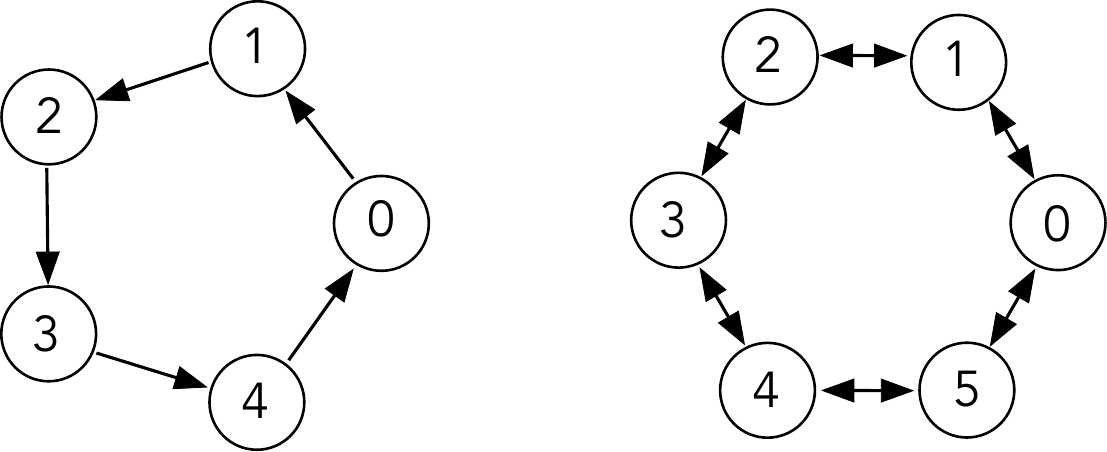}
}
\caption{{\em Left}: 5-node unidirectional ring. {\em Right}: 6-node bidirectional ring.}
\label{F:rings}
\end{figure}

\subsubsection{Symmetry Induces Phase Patterns}
For suitable ODEs, unidirectional and bidirectional rings with identical nodes and arrows
naturally sustain periodic oscillations such that the relative phases of consecutive nodes
differ by the same fraction of the period.
These patterns are a consequence of the {\em symmetries} of the ring: the permutations of nodes that preserve the connecting arrows \cite{GS23,GSS88}.
The symmetry group of an $n$-node unidirectional ring of identical nodes with identical 
NN coupling, as in Figure \ref{F:rings} (left),
is the cyclic group $\Z_n$ of order $n$, which is isomorphic to the group of rotational symmetries of
a regular $n$-gon. 
Considered as a permutation group acting on the nodes,
$\Z_n$ is generated by the $n$-cycle $\alpha = (0\, 1\, 2\, \cdots \, n-1)$.
The symmetry group of an $n$-node bidirectional ring (of identical nodes with identical 
NN coupling as in Figure \ref{F:rings}(right))
is the dihedral group $\D_n$ of order $2n$, which is isomorphic to the group of rotational 
and reflectional symmetries of a regular $n$-gon. The group $\D_n$ is generated by 
$\alpha$ and the map $\kappa$ where $\kappa(j) = -j \pmod{n}$.

More generally, in Section \ref{S:ED} we discuss
oscillations of ODEs with a symmetry group $\Gamma$. The main result
here is the Equivariant Hopf Theorem, which proves the existence of
bifurcating branches of solutions with certain spatiotemporal symmetries,
under suitable conditions.
When applied to ODEs with $n$-fold rotational
symmetry $\Z_n$, this result shows that typical oscillation patterns 
created by Hopf bifurcation have phase shifts
$kT/n$ between successive nodes, where $T$ is the overall period and
$0 \leq k < n$ is an integer: the aforementioned discrete rotating waves. See Theorem \ref{T:EHT}. The predicted patterns for
$\D_n$ symmetry are more complicated; see Section \ref{S:SBR} and \cite{GS86,GS02,GSS88}.

In particular, the $\sq$-period phase pattern in
Figure~\ref{F:ele_walk_outline} can occur in a {\em rigid}
manner in a unidirectional ring of
four identical systems with identical coupling, which has symmetry group $\Z_4$.
By `rigid' we mean that the phase pattern, as a fraction of the period, persists after sufficiently
small perturbations of the ODE that preserve the symmetry and network topology.
This condition is a form of  structural stability \cite{S67}.
Such a ring can also support a phase pattern where the phase shift from one node
to the next is $0, \shf$, or $\tq$ of the period, depending on the form of the ODE
and the solution under consideration. For some ODEs, different patterns may coexist for
different initial conditions. The stability of such states is a complicated issue,
addressed in \cite{GS85a,GS86,GSS88} using Birkhoff normal form reduction.

\begin{remarks}\em

(a) 
Although $\Z_4$ symmetry generates the phase pattern of the walk gait, 
a network that can generate all of the main quadruped gaits, without
distinct gaits being stable simultaneously, has at least 8 nodes and $\Z_2 \times \Z_4$
symmetry \cite{GSBC98,GSCB99}.

(b) Other examples of regular phase patterns include oscillations of the {\em repressilator},
a synthetic genetic circuit \cite{EL00}, and a ring of three identical FitzHugh--Nagumo neurons
\cite{GS23}. Both systems are modelled by ODEs with $\Z_3$ symmetry, and
Hopf bifurcation gives rise to oscillations in which successive components
are related by phase shifts of one third of the period.

(c) In generic $\Z_n$-symmetric Hopf bifurcation, critical eigenvalues are simple and the classical
Hopf Bifurcation Theorem applies. However, the Equivariant Hopf Theorem
provides extra information on the spatiotemporal symmetries of
the oscillations. These symmetries are present in the linearised
eigenfunctions, that is, in the form of the critical eigenvectors, which
proves that the phase pattern exists at linear order. However, the 
Equivariant Hopf Theorem shows that the pattern is {\em exact}, even when
nonlinear (symmetric) terms are present.

(d) In the network context, the situation is not so straightforward.
Equivariant maps for a network with symmetry group $\Gamma$ 
need not be admissible. In consequence the basic theorem that
generically the critical eigenspace for
an imaginary eigenvalue is $\Gamma$-simple is no longer valid.
The reason is that the proof of this theorem involves linear
perturbations given by projection to irreducible components,
which may not be admissible. 
However, for the $\Z_n$-symmetric ring networks studied in this paper,
it can be proved directly that all eigenvalues of generically simple.
\end{remarks}

\subsection{Summary of Paper}

Section \ref{S:BDB} summarises required background material on local
bifurcation, with emphasis on Hopf bifurcation. It discusses the role of symmetry
in dynamics through the notion of an equivariant ordinary differential equation (ODE).

Section \ref{S:END} introduces the main concepts involved in equivariant
dynamics and the analogous theory for network dynamics. In particular we
define the class of admissible ODEs associated with any network; these are
the ODEs that respect the network topology in a specific formal sense.
We review key results from the representation theory of compact
Lie groups, and in particular finite groups, and explain how they apply
to local bifurcation in equivariant dynamical systems. We discuss
symmetry-breaking and state the Equivariant Hopf Theorem on the
existence of bifurcating branches of periodic states for certain
groups of spatio-temporal symmetries. We also mention time reversal symmetry.

Section \ref{S:SUD} applies the Equivariant Hopf Theorem to 
unidirectional ring networks with cyclic symmetry group $\Z_n$. The analysis
leads to conditions for the existence
of discrete rotating waves, in which successive nodes in the ring oscillate
with the same waveform but regularly spaced phases $\frac{kT}{n}$
where $T$ is the overall period and $k$ is an integer between 0 and $n-1$.
Assuming
nodes have 1-dimensional state spaces $\R$, the first bifurcation---the only
one that can be stable locally---is Hopf if and only if the number $n=2N+1$ of nodes is
odd. The associated phase shifts are then $\frac{NT}{2N+1}$ or
$\frac{(N+1)T}{2N+1}$.

Section \ref{S:HBNNUR} specialises the results to networks. We study
Hopf bifurcation in a unidirectional ring with nearest neighbour coupling.
Again we assume nodes have 1-dimensional state spaces $\R$. 
We begin with nearest-neighbour coupling, in which case the results
of Section \ref{S:HBNNUR} impose constraints on the phase shift for
the first Hopf bifurcation. We show that when longer-range couplings are
allowed, the different types of Hopf bifurcation can occur in any order. 

Section \ref{S:MNS} generalises the analysis to nodes with higher-dimensional
state spaces. We show by example that
 the strong constraints on the first bifurcation no longer apply.
 
 Section \ref{S:SBR} carries out similar analyses for bidirectional
 rings, with symmetry group $\D_n$. We also mention `exotic'
 synchrony and phase patterns in networks, which arise for combinatorial
 reasons rather than being associated with symmetries, and discuss
 the difference between equivariant and admissible ODEs.

\section{Background on Dynamics and Bifurcations}
\label{S:BDB}

We assume familiarity with basic concepts in nonlinear dynamics and
bifurcation theory; see \cite{GH83}. In particular we assume knowledge of
the classical Hopf Bifurcation Theorem on the transition from a stable
equilbrium to a branch of periodic states \cite{HKW81}. We summarise analogous
concepts for equivariant dynamics \cite{GSS88} and network dynamics \cite{GS23,GST05,SGP03}. 

\subsection{Local Bifurcation}

Consider a 1-parameter family of smooth ($C^\infty$) maps
$f:\R^n \times \R \rightarrow \R^n$, where $\lambda \in \R$ is a parameter. 
There is a corresponding family of ODEs:
\begin{equation}
\label{E:Bif_ODE}
\dot{x} =  f(x, \lambda)\qquad x \in \R^n, \lambda \in \R
\end{equation}
For simplicity in stating results, we assume that for each $\lambda$, solutions $x_\lambda(t)$
for given initial conditions $x_\lambda(0)$ exist for all $t \in \R$. This condition
is generically valid \cite{LY73}. Usually all we need is local existence for $t$ near $0$,
which holds for all smooth $f$. More technically, everything can be stated for
$t$ lying in a suitable interval.

A {\em branch}  is a parametrised family $\{x_\lambda(t)\}$
of solutions that vary continuously with
$\lambda$. A {\em local bifurcation} occurs at $(x_0,\lambda_0)$ if
the topology of the set of solutions is not constant near $(x_{\lambda_0}(x_0),\lambda_0)$.
There are two types of local bifurcation: steady-state and Hopf.

A necessary condition for the occurrence of local bifurcation from a branch of equilibria
at $(x_0,\lambda_0)$
is that the Jacobian (or derivative) $J = \mathrm{D} f|_{x_0,\lambda_0}$ should have
eigenvalues on the imaginary axis (including 0). These are the {\em critical eigenvalues}
and their real eigenspaces are the {\em critical eigenspaces}. (The real
eigenspace $E_\mu$ for a non-real eigenvalue $\mu$ is the real part of the sum
of the complex eigenspaces for $\mu$ and its conjugate $\bar\mu$. It is spanned
over $\R$ by the real and imaginary parts of the complex eigenvectors.)

\noindent
(1) \quad A zero eigenvalue usually corresponds to {\em steady-state 
bifurcation}: typically, the number of equilibria changes near $(x_0,\lambda_0)$, and branches of
equilibria may appear, disappear, merge, or split as $\lambda$ varies
near $\lambda_0$. The possibilities here
can be organised, recognised, and classified using singularity theory~\cite{GSS88}.

\noindent
(2) \quad A nonzero imaginary eigenvalue usually corresponds to {\em Hopf bifurcation}
\cite{GH83,HKW81}.
Under suitable genericity conditions(a simple pair of eigenvalues $\pm\ii\omega$,
no other imaginary eigenvalues, and the eigenvalue crossing condition)
this leads to periodic
solutions whose amplitude (near the bifurcation point) is small. Degenerate
Hopf bifurcation, where the eigenvalue crossing condition fails, has been analysed
using singularity theory \cite{GL81}.

\subsubsection{Normalised Period and Phase Shifts}
Suppose that $\rho(t)$ is a $T$-periodic function. 
Define the corresponding {\em circle group} to be
$\Sone = \R/T\Z$. This is the group of phase shifts of $\rho$ modulo the period.
It is isomorphic to the multiplicative group of
complex numbers $\ee^{\ii\theta}$ on the unit circle by the map
$\ee^{\ii\theta} \mapsto \frac{T\theta}{2\pi}$, 
and we often use this isomorphism to
identify the two groups.

There are two main conventions when defining a phase shift, differing
in sign. We adopt the following convention:

\begin{definition}\em
\label{D:phaseshift}
Let $\rho(t)$ be a $T$-periodic function and let $\theta \in \Sone= \R/T\Z$. Then 
\begin{equation}
\label{E:phase_shift}
\rho(t - \theta)
\end{equation}
is $\rho(t)$ {\em phase-shifted by} $\theta$. 
Also, $\theta$ is the
{\em phase shift from} $\rho(t)$ {\em to} $\rho(t-\theta)$.
To obtain a unique value it is convenient to normalise $\theta$ to lie in $[0,T)$.
\end{definition}

For fixed $\theta$ and variable $t$, the time series of $\rho(t - \theta)$
is that of $\rho(t)$ shifted $\theta$ to the right.
The alternative convention has a $+$ sign in \eqref{E:phase_shift}.
Now $\theta$ changes to $-\theta$ so the time series is shifted $\theta$ to the left.

For example, in the walk gait the movement of successive legs is
separated by $\sq$-period phase shifts.

\subsection{Symmetries}

A {\em symmetry} of an ODE is a permutation $\sigma$ of the variables
that preserves the ODE. A symmetry of a network is a permutation $\sigma$ of the 
nodes that preserves arrows and their types.

\begin{example}\em
\label{ex:Z31D}
Consider the following ODE in variables $x=(x_0,x_1,x_2)$:
\begin{equation}
\label{E:Z31D}
\begin{array}{rcl}
\dot x_0 &=& \lambda x_0 - x_0^3+a x_2 \\
\dot x_1 &=& \lambda x_1 - x_1^3+a x_0 \\
\dot x_2 &=& \lambda x_2 - x_2^3+a x_1
\end{array}
\end{equation}
where $\lambda$ acts as a bifurcation parameter and $a$ is a coupling strength.

Equation \eqref{E:Z31D} is symmetric under the cyclic permutation $\alpha = (0\,1\,2)$.
That is, if we apply $\alpha$ to the equations we obtain the same set of equations,
though listed in permuted order. Abstractly, the equations are {\em equivariant}
for the group $\Gamma = \Z_3$ generated by $\alpha$. This means that if we write the ODE
as $\dot x = f(x)$, where $x=(x_0,x_1,x_2)$ and $f=(f_0,f_1,f_2)$, then
\begin{equation}
\label{e:Gamma_equiv}
f(\gamma x) = \gamma f(x) \qquad \forall \gamma\in \Gamma
\end{equation}
Here
\[
\alpha x = (x_1,x_2,x_0) \qquad \alpha f = (f_1,f_2,f_0)
\]

Because the functions involved are odd in $x$, 
equation \eqref{E:Z31D} has a further symmetry $\kappa(x)= -x$,
so the full symmetry group is $\Z_3 \times \Z_2$.
We discuss this later in Section \ref{S:ER}. (Adding a small quadratic term
$bx_i^2$ to each equation for $\dot x_i$  removes this extra symmetry,
leading to the same phase pattern.)

It is straightforward to analyse Hopf bifurcation in this example.
There is an equilibrium at $[0,0,0]^\mathrm{T}$ for all $\lambda,a$. The Jacobian is
\[
J = \Matrix{\lambda & 0 & a \\ a & \lambda& 0\\ 0 & a& \lambda}
\]
The eigenvalues $\mu_j$ and corresponding eigenvectors $u_j$ are
\beqn
\mu_0 = \lambda+a &:& [1,1,1]^\mathrm{T} = u_0 \\
\mu_1 = \lambda + \zeta^2a &:& [1,\zeta,\zeta^2]^\mathrm{T}=u_1 \\
\mu_2 = \lambda + \zeta a &:& [1,\zeta^2,\zeta]^\mathrm{T}=u_2
\eeqn
where $\zeta = \ee^{2\pi \ii/3}$. Generically (in $\lambda, a$) these eigenvalues
are simple. Indeed, they are simple unless $a=0$.

Let $\mu_j = \rho_j+\ii \sigma_j$. 
The real and imaginary parts of the eigenvalues are
\beqn
\rho_0 &=& \lambda+a \qquad\quad \sigma_0 = 0 \\
\rho_1 &=& \lambda-a/2 \qquad \sigma_1 = -\frac{\sqrt{3}a}{2}\\
\rho_2 &=& \lambda-a/2 \qquad \sigma_2 = \frac{\sqrt{3}a}{2}
\eeqn
Thus there is a Hopf bifurcation when $a=2\lambda$ and $a \neq 0$.

The trivial solution is stable for $\lambda+a < 0$ and $2\lambda < a$.
The first instability is the Hopf bifurcation if, in addition, $\lambda-a/2 > \lambda+a$, that is, $a<0$.
\end{example}

\begin{figure}[h!]
\centerline{%
\includegraphics[width=0.6\textwidth]{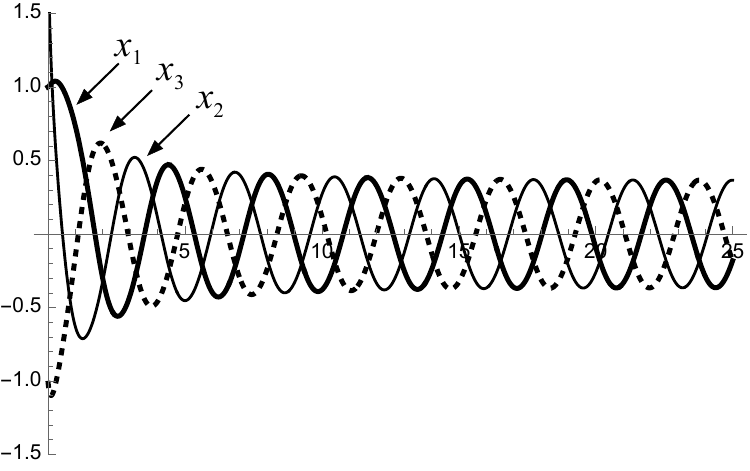}
}
\caption{Periodic state created by Hopf bifurcation in a $\Z_3$-symmetric ring. 
Solid thick = $x_1$, solid thin = $x_2$, dashed = $x_3$.
Parameters $a_0= -0.9, a_1 = -2$ (just after bifurcation).}
\label{F:Z3hopf}
\end{figure}

Figure \ref{F:Z3hopf} shows a simulation for parameter values
$a_0= -0.9, a_1 = -2$.
The oscillations have a clear phase pattern of the form 
\[
x(t) = (u(t), u(t-2T/3),u(t-T/3))
\]
where $T$ is the period and $u(t)$ is the common waveform at each node. 
The time-reversal of this ODE 
yields a rotating wave in the opposite direction:
\[
x(t) = (u(t), u(t-T/3),u(t-2T/3))
\]
In Section \ref{S:SUD} we discuss how the
rotating wave structure is generated by the $\Z_3$ symmetry,
and is related to the form of the eigenvectors $u_1$ and $u_2$.

\section{Equivariant and Network Dynamics}
\label{S:END}

The dynamics and bifurcations of
ring networks can be studied within two different frameworks: 
equivariant dynamics \cite{GSS88} and network dynamics \cite{GS23}.
The first focuses on symmetry properties; the second on the topology
(and type) of connections. Neither alone captures the behaviour of ring networks.
In particular, generic dynamics for a network with symmetry group $\Gamma$
can differ from generic $\Gamma$-equivariant dynamics \cite{GS23}. Symmetry and
topology can interact to create new generic phenomena.
In practice, symmetric networks are studied using a combination
of equivariant and network dynamics, while bearing in mind the potential
for unusual behaviour.

However, equivariant dynamics suffices for the purposes of this paper,
with one additional constraint: the variables that appear in model ODEs
must respect the network topology; see Section \ref{S:AOND}. 
We therefore refer to \cite{GST05,GS23,SGP03} for information on 
general network dynamics, and use the equivariant approach.
Symmetric ring networks are quite well behaved in this regard,
as we summarise briefly in Section \ref{S:SBR}.

Hopf bifurcation for systems with dihedral group symmetry $\D_n$
is analysed in \cite{GS86}; see also \cite{GS02, GSS88}. 
Hopf bifurcation for systems with cyclic group symmetry $\Z_n$
iis mentioned briefly in \cite[Chapter XVII Section 8(b)]{GSS88}. 
We supply further detail and prove some new results.

\subsection{Equivariant Dynamics}
\label{S:ED}

Equivariant dynamics examines how the symmetries of a differential
equation affect the behavior of its solutions, especially
their symmetries. To describe the main results we require some basic concepts. 

For simplicity, assume that the state space of the system is $X = \R^n$
and consider an ODE
\begin{equation}
\label{E:ODEapp}
\dot{x} = f(x) \qquad x \in X
\end{equation}
where $f:X \rightarrow X$ is a smooth map (vector field).
Symmetries enter the picture when a group of linear transformations 
$\Gamma$ acts on $X$. We require all elements of $\Gamma$
to map solutions of the ODE to solutions. By~\cite[Section 1.2]{GS02} this is equivalent to
$f$ being $\Gamma$-{\em equivariant}; that is:
\begin{equation}
\label{E:equivar}
f(\gamma x) = \gamma f(x)
\end{equation}
for all $\gamma \in \Gamma, x \in X$. We call~\eqref{E:ODEapp} a
$\Gamma$-{\em equivariant ODE}.

Condition~\Ref{E:equivar} captures the structure
of ODEs that arise when modeling a symmetric real-world system. 
It states that the vector field inherits the
symmetries, in the sense that symmetrically related points in state
space have symmetrically related vectors. 

For bifurcation theory, we consider a {\em parametrised family} of ODEs
\begin{equation}
\label{E:ODEappB}
\dot{x} = f(x,\lambda) \qquad x \in X
\end{equation}
The equivariance condition is then:
\begin{equation}
\label{E:equivar_lambda}
f(\gamma x,\lambda) = \gamma f(x,\lambda)
\end{equation}
for all $\gamma \in \Gamma, x \in X$. We call~\eqref{E:ODEappB} a
$\Gamma$-{\em equivariant family of ODEs}.

\subsection{Admissible ODEs in Network Dynamics}
\label{S:AOND}

A network of coupled ODEs is represented by a directed graph (or digraph) in which:
(a)
Each {\em node} corresponds to a component ODE. Nodes are classified into
distinct {\em node-types}.

(b)
Each {\em arrow} (directed edge) indicates {\em coupling}: the node at the head of the arrow
receives an input from the node at the tail end. Arrows are classified into
distinct {\em arrow-types}.

We call such a system of
ODEs a {\em network system}. The directed graph is the
 {\em network diagram}. The head of node $c$
is denoted by $\hd(c)$ and the tail by $\tl(c)$.

For each node $c$ choose a {\em node space} $P_c = \R^{n_c}$ and
corresponding  {\em node coordinates} $x_c$ on $P_c$. These coordinates are
multidimensional if $\dim P_c > 1$.
Nodes of the same {\em state-type}  have the same coordinate system
and their dynamics can meaningfully be compared.
The {\em total state space} is $P = \oplus_c P_c$.

A map $f = (f_1, \ldots, f_n)$ from $P$ to itself can be written in components as
\[
f_c: P \rightarrow P_c \qquad 1 \leq c \leq n
\]
For admissibility we impose extra conditions on the $f_c$ that reflect network
architecture. See \cite{GST05,GS23,SGP03} for details; here we summarise the main idea.

\begin{definition}\em
\label{D:admiss}
A map $f: P \rightarrow P$ is $\GG$-{\em admissible} if:

(1) {\em Domain Condition}:  For  every node $c$, the component $f_c$ 
depends only on the node variable $x_c$ and the input variables $x_i$ where
$i$ runs through the tail nodes of arrows with head $c$; see equation \eqref{E:tailnodes} below.

(2) {\em Symmetry Condition}:  If $c$ is a node, $f_c$ is
invariant under all permutations of tail node coordinates for input arrows of the same type.

(3) {\em Pullback Condition}:  If nodes $c$ and $d$ have the same node-type,
and the same number of input arrows of any given type, then
the components $f_c, f_d$ are identical as functions. The variables to
which they are applied correspond under some (hence any, by condition (2)) 
bijection that preserves the arrow-types.
\end{definition}

Conditions (2) and (3) can be combined into
a single {\em pullback condition} applying to any pair $c, d$ of nodes,
but it is convenient to separate them.

Each admissible map $f$ determines an {\em admissible ODE}
\begin{equation}
\label{E:admissODE}
\dot x = f(x)
\end{equation}
In node coordinates this takes the form
\begin{equation}
\label{E:tailnodes}
\dot x_c = f_c(x_c, x_{i_1}, \ldots, x_{i_m})
\end{equation}
for all nodes $c$, where $i_1, \ldots, i_m$ are the tails of the arrows
with head $c$; that is, the {\em input variables} to node $c$.

Equation \eqref{E:Z31D} is an example of an admissible ODE
for a 3-node unidirectional NN coupled ring in which all nodes have the same node-type
and all arrows have the same arrow-type (which is equivalent to
$\Z_3$ symmetry in this case). In particular, the only variables
appearing in the component for $\dot x_c$ are $x_c$ itself and
$x_{c-1}$, the variable corresponding to the tail of the unique input arrow.

If $f$ also depends on a (possibly multidimensional)
parameter $\lambda$, and is admissible as a function
of $x$ for any fixed $\lambda$, we have an {\em admissible family} of maps $f(x,\lambda)$
and ODEs $\dot x =f(x,\lambda)$. Such families arise in bifurcation theory.

If the network has symmetry group $\Gamma$, every admissible map
is $\Gamma$-equivariant. However, the converse is false in general; see \cite[Section 16.5]{GS23} and Section \ref{S:SBR}.

\subsection{Local Bifurcation in Equivariant Dynamics}

\subsubsection{Isotropy Subgroups and Fixed-Point Subspaces}
\label{S:ISFPS}
Recall that if $x(t)$ is a solution of a $\Gamma$-equivariant ODE, then so is $\gamma x(t)$
for all $\gamma \in \Gamma$.
Suppose that an equilibrium $x_0$ is {\em unique}. Then $\gamma x_0$
is also an equilibrium, so by uniquenesss,
$\gamma x_0 = x_0$ for all $\gamma \in \Gamma$. Thus
the solution is symmetric under $\Gamma$. 
However, when uniqueness fails---which is common in nonlinear 
dynamics---equilibria of a $\Gamma$-equivariant ODE
need not be symmetric under the whole of $\Gamma$. This phenomenon, called
{\em (spontaneous) symmetry-breaking}, is a general mechanism
for pattern formation. 

\subsubsection{Symmetry of a Solution}

To formalise `symmetry of a solution' we introduce a key concept:

\begin{definition}\em
If $x \in X$, the {\em isotropy subgroup} of $x$ is
\[
\Sigma_x = \{ \sigma \in \Gamma : \sigma x = x \}
\]
This group consists of all $\sigma$ that fix $x$.

Similarly 
the {\em isotropy subgroup} of a solution $x(t)$ is
\[
\Sigma_{x (t)} = \{ \sigma \in \Gamma : \sigma x(t) = x(t)\ \forall t  \}
\]

\begin{example}\em
\label{ex:Z3revisit}
We revisit Example \ref{ex:Z31D} with $\Gamma = \Z_3 \times \Z_2$.
Solutions on the {\em trivial branch} $x=0$ have isotropy subgroup $\Gamma$. 
Solutions on the bifurcating branch of periodic oscillations
have isotropy subgroup $\Z_2$.
\end{example}

There is a `dual' notion. If $\Sigma \subseteq \Gamma$ is a subgroup of $\Gamma$,
its {\em fixed-point subspace} is
\[
\Fix(\Sigma) = \{ x \in X : \sigma x = x \quad \forall \sigma \in \Sigma \}
\]
\end{definition}
Clearly $\Fix(\Sigma)$ comprises all points $x \in X$ whose isotropy
subgroup contains $\Sigma$. Fixed-point spaces provide
a natural class of subspaces that
are invariant for any $\Gamma$-equivariant map $f$:

\begin{proposition}
\label{P:fixinv}
Let $f:X \rightarrow X$ be a $\Gamma$-equivariant map, and
let $\Sigma$ be any subgroup of $\Gamma$. Then $\Fix(\Sigma)$
is an invariant subspace for $f$, and hence for the dynamics of~\eqref{E:ODEapp}.
\end{proposition}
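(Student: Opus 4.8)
The plan is to establish two facts in turn: that $\Fix(\Sigma)$ is a genuine linear subspace of $X$, and that $f$ carries this subspace into itself; the assertion about the dynamics then follows from standard ODE theory. First I would record that $\Fix(\Sigma)$ is a linear subspace. Since $\Gamma$ acts on $X = \R^n$ by linear transformations, each $\sigma \in \Sigma$ is linear, and
\[
\Fix(\Sigma) = \bigcap_{\sigma \in \Sigma} \ker(\sigma - I),
\]
an intersection of kernels of linear maps, hence a linear subspace. Linearity of the action is what makes this work; for a merely nonlinear action $\Fix(\Sigma)$ would not in general be a subspace, so this hypothesis is used essentially.

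The heart of the argument is the invariance of $\Fix(\Sigma)$ under $f$, and this is a one-line consequence of equivariance. I would fix an arbitrary $x \in \Fix(\Sigma)$ and $\sigma \in \Sigma$. Equivariance \eqref{E:equivar} gives $f(\sigma x) = \sigma f(x)$, while membership $x \in \Fix(\Sigma)$ gives $\sigma x = x$; combining these yields $f(x) = f(\sigma x) = \sigma f(x)$. As $\sigma \in \Sigma$ was arbitrary, $f(x)$ is fixed by all of $\Sigma$, so $f(x) \in \Fix(\Sigma)$. Hence $f(\Fix(\Sigma)) \subseteq \Fix(\Sigma)$.

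Finally I would deduce dynamical invariance. Because $\Fix(\Sigma)$ is a linear subspace and $f$ maps it into itself, the vector field $f$ is tangent to $\Fix(\Sigma)$ at each of its points, so its restriction defines an ODE on $\Fix(\Sigma)$. By existence and uniqueness of solutions, any trajectory of \eqref{E:ODEapp} whose initial condition lies in $\Fix(\Sigma)$ stays in $\Fix(\Sigma)$ for all $t$. The algebraic step is immediate, so the only point calling for any care is this last one—promoting invariance of the vector field to invariance of the flow—which rests on the tangency just noted together with uniqueness of solutions. I expect no genuine obstacle; the result is essentially a direct unwinding of the definitions of equivariance and of $\Fix(\Sigma)$.
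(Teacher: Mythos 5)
Your proof is correct and its central step---combining $f(\sigma x)=\sigma f(x)$ with $\sigma x = x$ to get $\sigma f(x) = f(x)$---is exactly the paper's one-line argument. The additional remarks on linearity of $\Fix(\Sigma)$ and on promoting invariance of the vector field to invariance of the flow are sound elaborations that the paper leaves implicit.
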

The proof is so simple that we give it here:
\begin{proof}
If $x \in \Fix(\Sigma)$ and $\sigma \in \Sigma$, then 
\[
\sigma f(x) = f(\sigma x) = f(x)
\]
so $f(x) \in \Fix(\Sigma)$.
\end{proof}

\begin{corollary}
\label{C:}
The isotropy subgroup of a solution 
$x(t)$ is the same as that of any point $x(t_0)$ on it.
\end{corollary}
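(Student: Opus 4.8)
The plan is to prove the equality of the two groups by establishing the inclusions $\Sigma_{x(t)} \subseteq \Sigma_{x(t_0)}$ and $\Sigma_{x(t_0)} \subseteq \Sigma_{x(t)}$ separately. The first is immediate: any $\sigma$ that fixes the entire trajectory, $\sigma x(t) = x(t)$ for all $t$, certainly fixes the single point $x(t_0)$, so $\sigma \in \Sigma_{x(t_0)}$. All the content of the corollary lies in the reverse inclusion, where a symmetry that is only known to fix one point of the orbit must be shown to fix the whole trajectory.

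For the reverse inclusion I would take $\sigma \in \Sigma_{x(t_0)}$, so $\sigma x(t_0) = x(t_0)$, and introduce the curve $y(t) = \sigma x(t)$. The first key step is to observe that $y(t)$ is again a solution of the $\Gamma$-equivariant ODE~\eqref{E:ODEapp}. Since $\Gamma$ acts by linear transformations, $\sigma$ commutes with $\tfrac{\dd}{\dd t}$, and using equivariance~\eqref{E:equivar} one computes $\dot y = \sigma \dot x = \sigma f(x) = f(\sigma x) = f(y)$. Thus $x(t)$ and $y(t)$ are two solution curves of the same ODE.

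The second key step is to compare initial data. By the choice of $\sigma$ we have $y(t_0) = \sigma x(t_0) = x(t_0)$, so the two solutions agree at time $t_0$. Invoking uniqueness of solutions of an ODE with smooth right-hand side through a prescribed initial point, I conclude $y(t) = x(t)$ for all $t$; that is, $\sigma x(t) = x(t)$ for all $t$, whence $\sigma \in \Sigma_{x(t)}$. Combining the two inclusions gives $\Sigma_{x(t)} = \Sigma_{x(t_0)}$.

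The only step requiring any care---and the main, though mild, obstacle---is the appeal to uniqueness. This is precisely where smoothness of $f$ enters: the standard existence--uniqueness theorem guarantees that the solution through $x(t_0)$ is unique, and it is this uniqueness that forces $y = x$ globally once the two solutions coincide at $t_0$. The standing assumption recorded after~\eqref{E:Bif_ODE}, that solutions exist for all $t \in \R$, then yields the conclusion for every $t$; absent that assumption the same argument still gives the equality on the common interval of existence, which suffices since $t_0$ is an interior point.
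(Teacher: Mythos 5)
Your proof is correct, but it takes a somewhat different route from the paper's. The paper's one-line proof invokes Proposition~\ref{P:fixinv}: taking $Y=\Fix(\Sigma_{x(t_0)})$, which is invariant under any equivariant $f$ and hence under the flow, it concludes that $x(t)\in Y$ for all $t$, i.e.\ $\Sigma_{x(t_0)}\subseteq\Sigma_{x(t)}$, the reverse inclusion being trivial. You instead argue directly that $y(t)=\sigma x(t)$ is again a solution of \eqref{E:ODEapp} --- using linearity of the action to commute $\sigma$ past $\tfrac{\dd}{\dd t}$ and equivariance \eqref{E:equivar} to get $\dot y = f(y)$ --- and that it agrees with $x(t)$ at $t_0$, so uniqueness forces $y\equiv x$. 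The two arguments rest on the same underlying mechanism: the dynamical invariance of $\Fix(\Sigma)$ asserted in Proposition~\ref{P:fixinv} is itself justified by precisely the uniqueness argument you spell out, applied to the restricted vector field $f|_{\Fix(\Sigma)}$. So your proof is, in effect, the paper's proof with the black box opened. The paper's phrasing buys economy and emphasises the structural role of fixed-point subspaces, which it has just introduced and will use repeatedly; yours buys self-containedness and an explicit record of where smoothness and uniqueness enter, including the correct observation that only the common interval of existence is needed, so the standing global-existence assumption after \eqref{E:Bif_ODE} is not essential here.
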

\begin{proof}
If $x(t_0) \in Y$ where $Y$ is an invariant subspace,
then $x(t) \in Y$ for all $t$.
\end{proof}

We can interpret $\Fix(\Sigma)$ as the space of all states
that have symmetry (at least) $\Sigma$. Then the restriction
\[
f|_{\Fix(\Sigma)}
\]
determines the dynamics of all such states. In particular, we can
find states with a given isotropy subgroup $\Sigma$ by 
considering the (generally) lower-dimensional system determined
by $f|_{\Fix(\Sigma)}$.

If $x \in X$ and $\gamma \in \Gamma$,
the isotropy subgroup of $\gamma x$ is conjugate to that of $x$:
\[
\Sigma_{\gamma x} = \gamma \Sigma_x \gamma^{-1}
\]
Therefore isotropy subgroups occur in conjugacy classes, which correspond
to group orbits of solutions.
For many purposes we can consider isotropy subgroups only up to conjugacy.
The conjugacy classes of isotropy subgroups are ordered by
inclusion (up to conjugacy). The resulting partially ordered set
is called the {\em lattice of isotropy subgroups}
or {\em isotropy lattice} \cite{GSS88}, although technically
it need not be a lattice. (If we do not pass to conjugacy classes, it is a lattice,
and it is often a lattice when we do.)

\subsection{Review of Representation Theory}

The structure of equivariant ODEs is tightly constrained by the representation
of the symmetry group on the state space.
In this paper all symmetry groups 
are compact Lie groups (especially finite groups)
$\Gamma$ acting linearly on $X = \R^k$ for some finite $k$. That is,
if $x \in X, \gamma \in \Gamma$ there are linear maps 
$\rho_\gamma:X \to X$ such that
\[
\rho_1 = I_k \qquad \rho_{\gamma\delta}(x) = \rho_{\gamma}(\rho_\delta(x))
\]
for all $x \in X, \gamma,\delta \in \Gamma$, where $I_k$ is the identity matrix
and $1$ is the identity element of $\Gamma$.
We write
\[
\gamma x = \rho_\gamma(x)
\]
and use the term
`representation' both for the action $\rho$ and for the representation space $X$.
If $Y \subseteq X$ is $\Gamma$-invariant, so that $\gamma y \in Y$ whenever $y \in Y$,
the restriction $\rho|_Y$ of the action of $\Gamma$ to $Y$ is also a representation,
called a {\em subrepresentation} or {\em component}
of $X$.
A representation $X$ is {\em irreducible} if the only $\Gamma$-invariant subspaces
are $0$ and $X$. In the compact case, any
representation $X$ is {\em completely reducible}:
\[
X = X_1 \oplus \cdots \oplus X_m
\]
where the $X_i$ are irreducible.

If $X$ is irreducible, the space $\mathcal{D}$ of linear $\Gamma$-equivariant maps $L:X \to X$
is an associative algebra over $\R$. By Schur's Lemma it is a division algebra
over $\R$, so $\mathcal{D}$ is isomorphic to precisely one of $\R, \C$, and $\Ha$, where
$\Ha$ is the quaternions. We say that $X$ is {\em absolutely irreducible} if
$\mathcal{D} \cong \R$, and {\em non-absolutely irreducible} if
$\mathcal{D} \cong \C$ or $\Ha$.

A component $W$ is {\em $\Gamma$-simple} if and only if either

(a) $W \cong V \oplus V$ where $V$ is absolutely irreducible, or

(b) $W $ is non-absolutely irreducible.

For each irreducible component $Y \subseteq X$, the corresponding
{\em isotypic component} $W_Y$ is defined to be the sum of all irreducible components
isomorphic to $Y$. It  can be written as
\[
W_Y = Y_1 \oplus \cdots \oplus Y_c
\]
where every $Y_i$ is isomorphic to $Y$. Each isotypic component is invariant
under all linear equivariant maps $L:X \to X$.

\subsubsection{Representation-Theoretic Conditions for Equivariant Hopf Bifurcation}

Consider a $\Gamma$-equivariant family of ODEs
\[
\dot x = f(x,\lambda) \qquad (x \in X = \R^k, \lambda \in \Lambda = \R)
\]
Suppose that a Hopf bifurcation from a branch of fully symmetric equilibria 
occurs at $(x_0,\lambda_0)$. Translating
coordinates we may assume $x_0=0, \lambda_0 = 0$, and we do this from now on.
Then $\DD f|_{(0,0)}$ has eigenvalues $\pm \omega \ii$ for $\omega \neq 0$.
This can happen {\em only} if $X$ contains a $\Gamma$-simple component.
The critical eigenspace $E_{\pm \ii\omega}$ is $\Gamma$-invariant,
and generically it is $\Gamma$-simple. Thus `$\Gamma$-simple component' 
is the equivariant analogue
of `simple eigenvalue' in ordinary dynamical systems.  
Moreover, there exists an equivariant coordinate change on $E_{\pm \ii\omega}$
such that, in the new coordinates,
\[
\DD f|_{(0,0)}= J = \Matrix{0 & -\omega I_m \\ \omega I_m & 0}
\]
where $\dim_\R E_{\pm \ii\omega} = 2m$.
The matrix $J$ commutes with the symmetry group
and defines an action of the circle group $\Sone = \R/2\pi \Z$
on $E_{\pm \ii\omega}$ by
\[
\theta x = \exp(\theta J/|\omega|) x
\]
This gives $E_{\pm \ii\omega}$ the structure of a complex vector space,
in which
\[
(r\ee^{\ii \theta})x = r\, \theta x
\]
This is {\em not} the usual complexification $\C \otimes_\R E_{\pm \ii\omega}$;
in particular the dimension of the latter over $\R$ is twice that of $E_{\pm \ii\omega}$.

The group $\Gamma\times \Sone$ now acts on $E_{\pm \ii\omega}$ by
\[
(\gamma,\theta) x = \theta\, \gamma x = \exp(\theta J/|\omega|) \gamma x
\]
This extra circle action induces phase patterns on bifurcating
branches of periodic solutions.

\subsubsection{Symmetry-Breaking}

{\em Spontaneous symmetry-breaking} in a $\Gamma$-equivariant ODE
occurs when the isotropy subgroup of a solution $x(t)$ is smaller than $\Gamma$.

The basic general existence theorem for bifurcating symmetry-breaking
equilibria is the Equivariant Branching Lemma of Cicogna~\cite{Ci81} and Vanderbauwhede~\cite{V80}. See \cite{GSS88, GS02}.
Two kinds of existence theorem for periodic solutions in equivariant systems 
have been developed over the past 30 years. Both are aimed at understanding
the kinds of spatiotemporal symmetries of periodic states that can be 
expected in such systems.
One is the Equivariant Hopf Theorem \cite{GS85a,GS02,GSS88}; the other is the $H/K$ Theorem
\cite{BG01}.  Both have analogues for network dynamics \cite{GS23}. In this paper
only the Equivariant Hopf Theorem is required. It is an analogue of
the Equivariant Branching Lemma and is proved by Liapunov-Schmidt
reduction from an operator equation on loop space \cite{GS85a,GSS88}.

At a non-zero imaginary critical eigenvalue,
the critical eigenspace $E$ supports an action not just of $\Gamma$,
but of $\Gamma \times \Sone$. This action is preserved by Liapunov-Schmidt reduction
\cite[Chapter XVI Section 3]{GSS88}.
The $\Sone$-action on the critical eigenspace is related to, but different from, the
phase shift action; it is determined by the
exponential of the Jacobian $J|_E$ on $E$. Specifically, if
the imaginary eigenvalues are $\pm \ii \omega$ with $\omega \neq 0$  then 
$\theta \in \Sone$ acts on $E$ like the matrix $\exp(\frac{\theta}{|\omega|} J|_E)$.
The $\Sone$-action on solutions is by phase shift, induced via the 
Liapunov-Schmidt reduction procedure. 

The Equivariant Hopf Theorem is analogous to the Equivariant
Branching Lemma, but the symmetry group $\Gamma$ is
replaced by $\Gamma \times \Sone$. 

\begin{definition}\em
\label{D:C-axial}
A subgroup $\Sigma\subset\Gamma\times\Sone$ is {\em $\C$-axial} if $\Sigma$ is 
an isotropy subgroup for the action of $\Gamma\times\Sone$ on $E$ and 
$\dim_\R\Fix(\Sigma)=2$.
\end{definition}

\begin{theorem}[\bf Equivariant Hopf Theorem]
\label{T:EHT}
If the Jacobian has non-real purely imaginary eigenvalues $\pm \ii\omega$, then generically for any
$\C$-axial subgroup $\Sigma \subseteq \Gamma\times\Sone$
acting on the critical eigenspace, there exists a branch 
of periodic solutions with spatiotemporal symmetry group $\Sigma$.
The period tends to $T=\frac{2\pi}{|\omega|}$ at the bifurcation point.
On solutions, $\Sone$ acts by phase shifts.
\end{theorem}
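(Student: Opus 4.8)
The plan is to follow the Liapunov--Schmidt route indicated just before the statement, recasting the search for periodic solutions as a $\Gamma\times\Sone$-equivariant zero-finding problem on a loop space. First I would rescale time by the unknown period, writing $x(t)=y(s)$ with $s = 2\pi t/T$ and $T=\frac{2\pi}{|\omega|}(1+\tau)$ for a scalar correction $\tau$, so that the task becomes finding $2\pi$-periodic solutions of
\[
\frac{\dd y}{\dd s} = \frac{1+\tau}{|\omega|}\, f(y,\lambda).
\]
Working on the Banach space of $2\pi$-periodic $C^1$ maps into $X$, I would define the operator $\Phi(y,\lambda,\tau) = \frac{\dd y}{\dd s} - \frac{1+\tau}{|\omega|} f(y,\lambda)$, whose zeros are exactly the sought periodic solutions. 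The group $\Gamma\times\Sone$ acts on this loop space with $\Gamma$ acting pointwise on $X$ and $\theta\in\Sone$ acting by the phase shift $(\theta\cdot y)(s)=y(s-\theta)$; equivariance of $f$ makes $\Phi$ equivariant for this action, which is the structural fact the whole argument exploits.

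Next I would analyse the linearisation $L=\DD_y\Phi|_{(0,0,0)} = \frac{\dd}{\dd s} - \frac{1}{|\omega|}J$, where $J=\DD f|_{(0,0)}$. Expanding a $2\pi$-periodic function in Fourier modes $\ee^{\ii k s}v$ shows that $L$ annihilates exactly the modes with $\ii k = \frac{1}{|\omega|}(\text{eigenvalue of }J)$, that is, $k=\pm 1$ paired with the critical eigenvectors. Hence $\ker L$ is isomorphic to the critical eigenspace $E$, and under this identification the loop-space phase-shift action restricts precisely to the circle action $\theta x = \exp(\theta J/|\omega|)x$ built on $E$ earlier in the excerpt. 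Because $J$ generically has no other imaginary eigenvalues resonating with higher Fourier modes, $L$ is Fredholm of index zero with $\mathrm{coker}\,L\cong\ker L$.

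The heart of the proof is then an equivariant Liapunov--Schmidt reduction. Using compactness of $\Gamma\times\Sone$ I would average the splitting projections over the group to make them equivariant, so that the reduced bifurcation map $g:E\times\R\to E$ is again $\Gamma\times\Sone$-equivariant, and its zeros correspond to periodic solutions carrying the expected symmetries. The statement now becomes the analogue of the Equivariant Branching Lemma with $\Gamma$ replaced by $\Gamma\times\Sone$: for a $\C$-axial isotropy subgroup $\Sigma$, Proposition~\ref{P:fixinv} makes $\Fix(\Sigma)$ invariant under $g$, and since $\dim_\R\Fix(\Sigma)=2$ the residual circle acting on $\Fix(\Sigma)\cong\C$ forces $g$ there to take the form $g(z,\lambda)=p(|z|^2,\lambda)\,z$ for a real-valued $p$. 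Solving $p=0$ by the implicit function theorem, using the eigenvalue-crossing condition to guarantee $\partial p/\partial\lambda\neq 0$ at the origin, yields a branch $z(\lambda)$ with $|z|\to 0$: the desired branch of periodic states. Undoing the rescaling sends the period to $\frac{2\pi}{|\omega|}$, and the $\Sone$ factor of $\Sigma$ acts by phase shift by construction.

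I expect the main obstacle to be twofold. The delicate analytic point is verifying that $L$ is Fredholm with kernel exactly $E$ and the stated circle action; this requires the genericity hypotheses (simplicity and absence of extra resonant imaginary eigenvalues) to exclude higher Fourier modes in $\ker L$, and it is precisely where the word ``generically'' in the statement does its work. The delicate algebraic point is confirming that the equivariant reduction genuinely preserves the $\Gamma\times\Sone$-action, so that the $\C$-axial hypothesis transfers to $g$; once that is secured, the collapse to a scalar amplitude equation on $\Fix(\Sigma)$ is forced by equivariance and the remaining step is routine.
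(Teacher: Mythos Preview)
Your proposal is correct and follows precisely the Liapunov--Schmidt route on loop space that the paper invokes; the paper itself gives no proof beyond citing \cite{GS85a,GS02,GSS88}, and your sketch is a faithful summary of the argument in those references. One small point: the reduced map should carry both parameters $(\lambda,\tau)$, so on $\Fix(\Sigma)\cong\C$ one obtains $g(z,\lambda,\tau)=(p+\ii q)(|z|^2,\lambda,\tau)\,z$ with $p,q$ real, first solving $q=0$ for $\tau$ (possible since $\omega\neq 0$) and then $p=0$ for $\lambda$ via eigenvalue crossing---but this is a standard refinement you clearly have in mind.
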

\begin{proof}
See~\cite[Theorem 5.1]{GS85a}, \cite[Theorem 4.9]{GS02}, or
 \cite[Chapter XVI Theorem 4.1]{GSS88}.
\end{proof}

The isotropy subgroup in $\Gamma \times \Sone$ of a periodic solution
is a {\em twisted subgroup}  
\begin{equation}
\label{E:twisted}
H^\phi = \{ (h, \phi(h): h \in H\}
\end{equation}
where $H$ is a subgroup of $\Gamma$ and $\phi:\Gamma \to \Sone$
is a homomorphism. The kernel $K = \ker\phi \subseteq \Gamma$ is
the group of {\em spatial} symmetries, and $H$ is
the group of {\em spatiotemporal} symmetries. The quotient $H/K$
determines the phase shifts of the corresponding phase pattern.
Since $H/K$ embeds in $\Sone$, either $H/K \cong \Sone$
or $H/K \cong \Z_k$ for some $k$. Possible pairs $(H,K)$ are
classified in \cite{BG01}; see also \cite{GMS16}. Not all can be obtained via Hopf bifurcation
from a fully symmetric equilibrium \cite{FG10}.

Equivariant Hopf bifurcations have been studied when the group $\Gamma$ is:

\quad $\Z_n$ \cite[Chapter XVII Section 8]{GSS88}

\quad $\D_n$ \cite{GS86},\cite[Chapter XVIII Sections 1--4]{GSS88}

\quad $\ES_n$ \cite{S96}

\quad $\oo(2)$ \cite[Chapter XVII]{GSS88}

\quad $\ES\oo(2)$ \cite[Chapter XVII Section 8]{GSS88}

\quad $\oo(3)$ \cite[Chapter XVIII Section 5]{GSS88}

\quad  $\ES\oo(n)$ \cite[Chapter XVII Section 5]{GSS88}

\quad $\ES_n\times\oo(2)$ \cite{S96}

\quad Symmetry group of hexagonal lattice \cite[Chapter XVIII Section 6]{GSS88}, \cite{RSW86}

\quad  Symmetry groups of planar lattices \cite{DGSS95,GSS88}

\quad Symmetry group of cubic lattice $\Z^3$ \cite{DS99}



\subsubsection{Network Analogues}

There are natural analogues of the Equivariant Hopf Theorem and
the $H/K$ Theorem for networks \cite{GS23}. The Network Hopf Theorem
has a rigorous proof; that of the Network $H/K$ Theorem rests
on some `Rigidity Conjectures', as yet proved only in special cases
or under stronger hypotheses. See \cite[Chapters 15, 17]{GS23}. We omit details.

\subsubsection{Time-Reversal}

The next Proposition, about
reversing time in an ODE and a given solution, is trivial but important.

\begin{proposition}
\label{P:time_rev}
Consider a periodic state with period normalised to $2\pi$.
If a phase pattern with phase shifts $\theta_i$ occurs for
an admissible ODE $\dot x = f(x)$, then the reverse pattern
with phase shifts $2\pi-\theta_i$ occurs for
the admissible ODE $\dot x = -f(x)$.
\end{proposition}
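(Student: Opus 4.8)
The plan is to combine the elementary time-reversal substitution for the ODE with careful bookkeeping of the phase-shift convention of Definition~\ref{D:phaseshift}. First I would fix the meaning of the hypothesis: $x(t) = (x_c(t))_c$ is a $2\pi$-periodic solution of $\dot x = f(x)$ in which every node carries a common waveform $u$ up to a phase shift, so that $x_c(t) = u(t - \theta_c)$ with $\theta_c \in \Sone$. I would then apply the substitution $y(t) = x(-t)$. By the chain rule,
\[
\dot y(t) = -\dot x(-t) = -f(x(-t)) = -f(y(t)),
\]
so $y$ solves $\dot y = -f(y)$, and $y$ is again $2\pi$-periodic since $y(t+2\pi) = x(-t-2\pi) = x(-t) = y(t)$.

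Next I would check that $-f$ is admissible whenever $f$ is, so that $\dot y = -f(y)$ is a legitimate admissible ODE rather than merely an equivariant one. Each clause of Definition~\ref{D:admiss} constrains only \emph{which} input variables a component $f_c$ may depend on (Domain Condition), which permutations of equal-type inputs it must respect (Symmetry Condition), and which components must coincide as functions (Pullback Condition). Every one of these properties is inherited verbatim by $-f_c$, since negation neither introduces new argument dependencies nor disturbs any functional equality or invariance. Hence the reversed state $y$ is a genuine solution of an admissible system.

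The phase-shift computation is where the sign conventions must be reconciled with care. Introducing the time-reversed waveform $v(t) = u(-t)$, I would write, for each node,
\[
y_c(t) = x_c(-t) = u(-t - \theta_c) = v(t + \theta_c) = v\bigl(t - (-\theta_c)\bigr),
\]
so that, measured against the common reversed waveform $v$, node $c$ carries phase shift $-\theta_c$. Normalising to $[0,2\pi)$ as prescribed in Definition~\ref{D:phaseshift} converts $-\theta_c$ into $2\pi - \theta_c$, which is exactly the asserted reversed pattern. The essential observation is that two logically independent sign flips occur: one from differentiating $x(-t)$, which produces the negated vector field and hence the ODE $\dot y = -f(y)$, and one from reversing time inside the waveform argument, which is absorbed into redefining the reference waveform as $v = u(-t)$ and yields the phase reversal.

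As the authors note, the statement is essentially trivial, so there is no obstacle of substance; the only genuine pitfall is a sign error in matching the $t \mapsto -t$ substitution against the minus-sign convention $\rho(t-\theta)$. I would therefore close with a sanity check against the $\Z_3$ system of Example~\ref{ex:Z31D} (Figure~\ref{F:Z3hopf}): the forward pattern $(u(t), u(t-2T/3), u(t-T/3))$ has phase shifts $(0, 2T/3, T/3)$, and applying $\theta_c \mapsto 2\pi-\theta_c$ with $T=2\pi$ returns $(0, T/3, 2T/3)$, precisely the reverse wave quoted there.
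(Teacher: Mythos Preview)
Your proof is correct and follows essentially the same route as the paper's: the time-reversal substitution $y(t)=x(-t)$, the chain-rule computation yielding $\dot y = -f(y)$, and the observation that phase shifts become $-\theta_c$, normalised to $2\pi-\theta_c$. You simply spell out more explicitly what the paper leaves terse, in particular the admissibility of $-f$ via Definition~\ref{D:admiss} and the introduction of the reversed reference waveform $v(t)=u(-t)$.
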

\begin{proof}
If $f$ is equivariant or admissible then so is $-f$.
Now assume that $x(t)$ is a periodic solution of $\dot x = f(x)$,
and define $s=-t$ and $y(s) = x(-s)=x(t)$. Then
\[
\frac{\dd}{\dd s} y(s) = -\frac{\dd}{\dd t} y(s) = -\frac{\dd}{\dd t} x(t) =-f(x(t)) = -f(y(s))
\]
Therefore changing $f$ to $-f$ reverses time, and a solution $x(t)$ for
$f$ becomes a solution $y(s)$ for $-f$. A phase
pattern $(\theta_i)$ for $x$ therefore becomes $(-\theta_i)$ for $y$.
Normalising to $\theta_i \in [0,2\pi)$ this becomes $(2\pi-\theta_i)$.
\end{proof}


\begin{proposition}
\label{P:reverse_Hopf}
If the admissible ODE $\dot x = f(x,\lambda)$ has a Hopf bifurcation at 
$(x_0,\lambda_0)$, then the time-reversal $\dot x = -f(x,\lambda)$
has a Hopf bifurcation at $(-x_0,\lambda_0)$.
\end{proposition}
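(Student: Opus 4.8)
The plan is to unwind the paper's loose definition of Hopf bifurcation and verify that each ingredient survives the sign change $f\mapsto -f$. Recall that ``Hopf bifurcation at $(x_0,\lambda_0)$'' means here that $x_0$ is an equilibrium, $f(x_0,\lambda_0)=0$, and the Jacobian $\DD f|_{(x_0,\lambda_0)}$ has a nonzero purely imaginary eigenvalue pair $\pm\ii\omega$ with $\omega\in\R$, $\omega\neq 0$. So I must check two things for the reversed field $g(x,\lambda)=-f(x,\lambda)$: that the claimed point is an equilibrium of $g(\cdot,\lambda_0)$, and that $\DD g$ there carries the same critical eigenvalues.

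The equilibrium is immediate. Since $g=-f$ vanishes exactly where $f$ does, $f$ and $g$ have identical zero-sets, so every equilibrium of $f$ is an equilibrium of $g$ and conversely. In particular $x_0$ is an equilibrium of $g(\cdot,\lambda_0)$. In the normalization used throughout the paper the bifurcating equilibrium is translated to the origin, $x_0=0$, whence $-x_0=x_0$ and the stated point $(-x_0,\lambda_0)$ coincides with $(x_0,\lambda_0)$. When $f$ additionally carries the reflection symmetry $\kappa:x\mapsto -x$, as in Example~\ref{ex:Z31D}, one has $f(-x_0,\lambda_0)=-f(x_0,\lambda_0)=0$ directly, so $-x_0$ is literally an equilibrium, and $\kappa$-equivariance then also gives $\DD f|_{(-x_0,\lambda_0)}=\DD f|_{(x_0,\lambda_0)}$. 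I would also note, citing the opening line of the proof of Proposition~\ref{P:time_rev}, that $-f$ is admissible whenever $f$ is, so the reversed system is again an admissible ODE.

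The eigenvalue condition is the heart of the matter, and it is a one-line spectral observation: $\DD g|_{(x_0,\lambda_0)}=-\DD f|_{(x_0,\lambda_0)}$, and negating a matrix negates its spectrum, sending each eigenvalue $\mu$ to $-\mu$. Applied to the critical pair, the set $\{\ii\omega,-\ii\omega\}$ is mapped to $\{-\ii\omega,\ii\omega\}$, which is the same set. Hence $\DD g|_{(x_0,\lambda_0)}$ again has the nonzero purely imaginary eigenvalues $\pm\ii\omega$, so $g$ exhibits a Hopf bifurcation in the paper's loose sense at $(x_0,\lambda_0)=(-x_0,\lambda_0)$, as required.

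There is essentially no hard step: the result is a bookkeeping statement about how the two defining ingredients of a loosely defined Hopf point behave under $f\mapsto -f$. The only place demanding a moment's care is the appearance of $-x_0$ rather than $x_0$, which I resolve above via the normalization $x_0=0$ (equivalently via the reflection symmetry of the examples). For completeness I would remark that under the stricter Hopf hypotheses the eigenvalue-crossing condition is also inherited but with reversed sign, since the real parts of the critical eigenvalues of $-\DD f$ are the negatives of those of $\DD f$; this interchanges the sub- and supercritical roles and the direction of stability exchange, consistent with the time-reversal interpretation of Proposition~\ref{P:time_rev}.
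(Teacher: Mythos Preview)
Your proposal is correct and follows essentially the same approach as the paper: both reduce to the spectral observation that $\DD(-f)=-\DD f$ so that the set $\{\pm\ii\omega\}$ is preserved under negation, together with a remark that the nondegeneracy conditions transfer. You are in fact more careful than the paper about the appearance of $-x_0$ in the statement; the paper's proof simply asserts $\DD(-f)|_{(-x_0,\lambda_0)} = -\DD f|_{(x_0,\lambda_0)}$ without addressing why the Jacobian at $-x_0$ should agree with that at $x_0$, whereas you correctly note that this relies on the normalization $x_0=0$ (or the $x\mapsto -x$ symmetry present in the examples).
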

\begin{proof}
The Jacobians satisfy
\[
\DD(-f)|_{(-x_0,\lambda_0)} = -\DD f|_{(x_0,\lambda_0)}
\]
The eigenvalues of $\DD(-f)|_{(-x_0,\lambda_0)}$ are minus those
of $\DD f|_{(x_0,\lambda_0)}$. So $\DD(-f)|_{(-x_0,\lambda_0)}$
has eigenvalues $\mp \ii \omega$ if and only if
$\DD f|_{(x_0,\lambda_0)}$
has eigenvalues $\pm\ii \omega$.

Moreover, the non-resonance and eigenvalue crossing conditions 
for $\DD(-f)|_{(-x_0,\lambda_0)}$
are trivially equivalent to those for $\DD f|_{(x_0,\lambda_0)}$.
\end{proof}

\section{Symmetric Unidirectional Rings}
\label{S:SUD}

Consider a symmetric unidirectional $n$-node ring $\UU_n$ with NN
coupling. 

\subsection{1-Dimensional Node Spaces}
Initially we assume that the node spaces are $1$-dimensional,
that is, $\R$. 
Figure \ref{F:rings} (left) shows the case $n=5$. This network has
$\Z_n$ symmetry, generated by an element $\alpha$.
If we number the nodes as $\CC =\{0, 1, \ldots, n-1\}$ then
the symmetry group acts by $\alpha^k(c) = c+k \pmod{n}$.

Admissible ODEs have the form
\begin{equation}
\label{E:Un_ODE}
\dot{x}_c = f(x_c, x_{c+1}) \qquad c \in \CC
\end{equation}
where again we take addition modulo $n$. The linear admissible maps
are all linear combinations $L = a_0 I+a_1 A$ of the  two matrices:
\begin{equation}
\label{E:lin_admiss}
I = \Matrix{1 & 0 & 0 & \cdots & 0 \\
		0 & 1 & 0 & \cdots & 0 \\
		0 & 0 & 1 & \cdots & 0 \\
		\vdots & \vdots &\vdots & \ddots & \vdots \\
		0 & 0 & 0 & \cdots & 1}		
\qquad
A = \Matrix{0 & 1 & 0 & \cdots & 0 \\
		0 & 0 & 1 & \cdots & 0 \\
		0 & 0 & 0 & \ddots & 0 \\
		\vdots & \vdots &\vdots & \ddots & \vdots \\
		1 & 0 & 0 & \cdots & 0}
\end{equation}
where the matrix $A$ is the adjacency matrix for NN coupling.

By symmetry, the eigenspaces of $L$ can be deduced from
the natural permutation representation of $\Z_n$ on $\R^n$.
The isotypic components are the spaces $V_k = \Re(\C\{v_k , v_{n-k}\})$ where
$\Re$ is the real part and 
\[
v_k =  [1, \zeta^k, \zeta^{2k}, \ldots, \zeta^{n-1}]^{\mathrm{T}}
\]
Then $L$ acts on $V_k$ by
\[
v_k \mapsto (a_0+\zeta^k a_1)v_k
\]
so the eigenvalues of $L$ are
\[
\mu_k = a_0+\zeta^k a_1
\]
and each eigenvalue is simple provided that $a_1 \neq 0$.
(If $a_1=0$ no Hopf bifurcation occurs
since all eigenvalues are real; this case is dynamically uninteresting
since the nodes are decoupled.)

For Hopf bifurcation there are two cases: $n$ odd and $n$ even.

\vspace{.1in}
\noindent
{\em Case 1}: $n$ odd.

When $n = 2m+1$ there is one real root of unity $\zeta^0 = 1$,
and the others occur as $m$ complex conjugate pairs $\zeta^r$ and
$\zeta^{n-r} = \bar\zeta^r$ for $1 \leq r \leq m$. The corresponding real eigenspaces 
have dimension 1 and 2, respectively. Hopf bifurcation can occur only
for a 2-dimensional critical eigenspace, so $\zeta^0$ does not occur
and $1 \leq r \leq m$. The other real eigenspaces afford irreducible
representations of $\Z_n$, and these are non-absolutely irreducible, of complex type.

A necessary (though not sufficient) condition for a Hopf branch from the trivial
solution to be stable is that it is spawned by the {\em first bifurcation}.
That is, when the corresponding eigenvalues becomes critical (as 
the bifurcation parameter $\lambda$ increases), all
other eigenvalues have negative real part.

When node spaces are 1-dimensional and the ring topology is NN, 
there are strong constraints on the
possible eigenvalues for the first bifurcation to be a Hopf bifurcation.

\begin{theorem}
\label{T:firstHopfNN}
Let $\GG$ be a $\Z_n$-symmetric ring with unidirectional {\rm NN} coupling
and 1-dimensional node spaces. Then the first local bifurcation
is Hopf if and only if $n=2N+1$ is odd. In that case the critical eigenvalues
are the $\rho_k$ for $k= N,N+1$.
\end{theorem}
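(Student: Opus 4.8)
The plan is to reduce the statement to an elementary optimisation over the Fourier modes $k \in \{0,1,\ldots,n-1\}$. Since the linear admissible maps are $L = a_0 I + a_1 A$ with eigenvalues $\mu_k = a_0 + a_1\zeta^k$ (where $\zeta = \ee^{2\pi\ii/n}$), I would first split each eigenvalue into real and imaginary parts,
\[
\rho_k = a_0 + a_1\cos(2\pi k/n), \qquad \sigma_k = a_1\sin(2\pi k/n),
\]
where $a_0 = a_0(\lambda)$, $a_1 = a_1(\lambda)$ are the two partial derivatives of $f$ at the equilibrium. The bifurcation at mode $k$ is steady-state precisely when $\sigma_k = 0$ and Hopf precisely when $\sigma_k \neq 0$. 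Observing that $\sin(2\pi k/n) = 0$ only for $k=0$ and (when $n$ is even) $k=n/2$, the only modes that can give a steady-state bifurcation are $k=0$ and, for even $n$, $k=n/2$; every other mode is genuinely complex.

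Next I would identify the mode controlling the first bifurcation. As $\lambda$ increases through the first loss of stability, the eigenvalue(s) reaching the imaginary axis are exactly those attaining $\max_k \rho_k = 0$; since $\rho_k = a_0 + a_1 c_k$ with $c_k := \cos(2\pi k/n)$ and $a_0$ common to all modes, this amounts to maximising $a_1 c_k$. Thus the type of the first bifurcation is governed entirely by the sign of $a_1$ at the crossing together with the geometry of $\{c_k\}$. The key elementary lemma is the location of the extrema of $c_k$: its maximum is attained uniquely at $k=0$ (with $c_0=1$), while its minimum is attained at $k=n/2$ when $n$ is even (where $c_{n/2}=-1$) and at the conjugate pair $k=N,N+1$ when $n=2N+1$ is odd. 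The latter follows from the identity $2\pi(N+1)/n = 2\pi - 2\pi N/n$, which gives $c_N = c_{N+1}$, together with monotonicity of cosine on $[0,\pi]$ to see these are the strict minimisers.

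The conclusion then follows by a short case analysis on the sign of $a_1$ (nonzero by hypothesis). If $a_1 > 0$, the maximiser is $k=0$, a real eigenvalue, so the first bifurcation is steady-state for every $n$. If $a_1 < 0$, the maximiser is the minimiser of $c_k$: for even $n$ this is the real mode $k=n/2$ (steady-state), whereas for odd $n$ it is the complex conjugate pair $k=N,N+1$, where $\sigma_N = a_1\sin(2\pi N/n)\neq 0$, giving a Hopf bifurcation. Hence the first bifurcation can be Hopf only when $n$ is odd, and it is Hopf when $n=2N+1$ and $a_1<0$, with critical modes $k=N,N+1$ and conjugate imaginary eigenvalues $\mu_{N+1}=\overline{\mu_N}$; these are precisely the modes associated with the phase shifts $NT/n$ and $(N+1)T/n$.

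The argument is essentially a computation, so I expect no serious obstacle, but two points deserve care. First, one must be precise about what ``first bifurcation is Hopf iff $n$ odd'' asserts: the necessity ($n$ even forces a real critical mode, hence steady-state) is unconditional, whereas for odd $n$ the Hopf conclusion requires $a_1<0$ --- for odd $n$ with $a_1>0$ the first bifurcation is the steady-state mode $k=0$. I would therefore read the result as: a Hopf first bifurcation is \emph{possible} if and only if $n$ is odd, and state the sign condition on $a_1$ explicitly. Second, I would invoke the standing genericity assumptions ($a_1\neq 0$, which already gives simple eigenvalues as noted before the theorem, together with a transversal eigenvalue crossing) so that the crossing mode is indeed a nondegenerate steady-state or Hopf point.
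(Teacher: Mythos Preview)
Your proposal is correct and follows essentially the same route as the paper: both reduce to ordering the real parts $\rho_k = a_0 + a_1\cos(2\pi k/n)$ by a case split on the sign of $a_1$, observing that the extremal modes are $k=0$ (and $k=n/2$ for even $n$) versus the conjugate pair $k=N,N+1$ for odd $n$. Your explicit flagging of the sign condition $a_1<0$ needed for the ``if'' direction when $n$ is odd is a useful clarification that the paper leaves implicit in the proof.
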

\begin{proof}
When $a_1 > 0$ the real parts of the eigenvalues are ordered 
in the same manner as the real parts of $\zeta^r$, that is,
\[
\cos \frac{2\pi N}{2N+1} < \cos \frac{2\pi (N-1)}{2N+1} < \cdots < 
	\cos \frac{2\pi}{2N+1} < 1
\]
so the first bifurcation is not Hopf.

When $a_1 < 0$ the order is the reverse of this, so the first bifurcation
occurs for $\zeta^N$ with real part $\cos \frac{2\pi N}{2N+1}$.
It is a Hopf bifurcation.

\vspace{.1in}
\noindent
{\em Case 2}: $n$ even.

When $n = 2N$ there are two real roots of unity: $\zeta^0 = 1$ and $\zeta^N = -1$.
The others occur as $N-1$ complex conjugate pairs $\zeta^r$ and
$\zeta^{n-r} = \bar\zeta^r$ for $1 \leq r \leq N-1$. The corresponding real eigenspaces 
have dimension 1 and 2, respectively. Hopf bifurcation can occur only
for a 2-dimensional critical eigenspace, so $\zeta^0$ and $\zeta^N$ do not occur
and $1 \leq r \leq N-1$. The other real eigenspaces afford irreducible
representations of $\Z_n$, and these are non-absolutely irreducible of complex type.

Again when $a_1 > 0$ the real parts of the eigenvalues are ordered 
in the same manner as the real parts of $\zeta^r$, which are now
\[
-1 < \cos \frac{2\pi (N-1)}{2N} < \cos \frac{2\pi (N-2)}{2N} < \cdots < 
	\cos \frac{2\pi}{2N} < 1
\]
When $a_1 < 0$ the ordering is the reverse. The first bifurcation
occurs for the eigenvalue $a_0 + a_1 \zeta^r$ where $r = 0, N$
respectively. That is, for $a_0 \pm a_1$. Since this is real, the
first bifurcation cannot be Hopf.
\end{proof}

\begin{example}\em
\label{ex:Z51D}
Consider the following $\Z_5$-equivariant ODE in variables $x = (x_0,x_1,x_2,x_3,x_4)$:
\begin{equation}
\label{E:Z51D}
\begin{array}{rcl}
\dot x_0 &=& \lambda x_0 - x_0^3+a x_4 \\
\dot x_1 &=& \lambda x_1 - x_1^3+a x_0 \\
\dot x_2 &=& \lambda x_2 - x_2^3+a x_1 \\
\dot x_3 &=& \lambda x_3 - x_3^3+a x_2 \\
\dot x_4 &=& \lambda x_4 - x_4^3+a x_3 
\end{array}
\end{equation}
where $\lambda$ acts as a bifurcation parameter and $a$ is a coupling strength.
(Again there is an extra symmetry $x \mapsto -x$, which we can remove
with a small quadratic term. See Section \ref{S:ER}.)
There is an equilibrium at $[0,0,0]^\mathrm{T}$ for all $\lambda$. 

Figure \ref{F:Z5hopf} shows a simulation of a rotating wave
for parameter values $\lambda= -1.1, a = -2$.
We omit the eigenvalue analysis, which follows from Theorem \ref{T:firstHopfNN}.

\begin{figure}[h!]
\centerline{%
\includegraphics[width=0.6\textwidth]{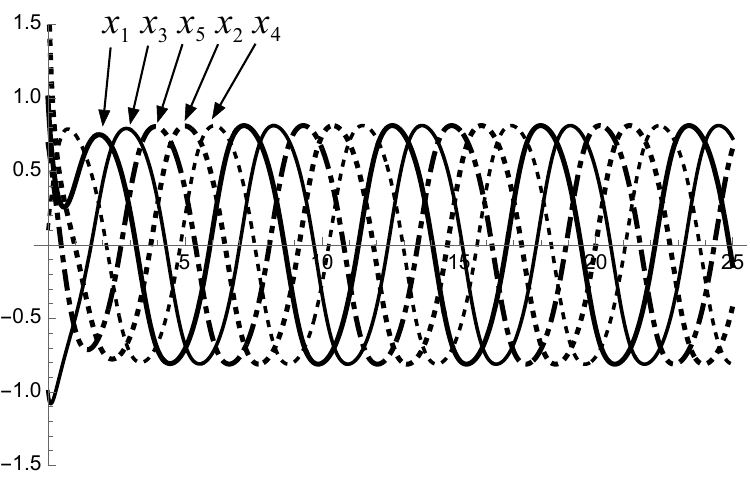}
}
\caption{Hopf bifurcation in a $\Z_5$ ring for the ODE \eqref{E:Z51D}. Horizontal coordinate: $\lambda$. Vertical coordinates: $x_i$ for $0 \leq i \leq 4$. Thick solid = node 1, thick dash = node 2, thin solid = node 3, thin dash = node 4, thick dot-dash = node 5.
Parameters $\lambda= -1.1,a = -2$.}
\label{F:Z5hopf}
\end{figure}

In this simulation the succession of nodes for phase shifts of $T/5$
are $1\ 3\ 5\ 2\ 4$. Equivalently, the phase shifts for nodes in cyclic order $1\ 2\ 3\ 4\ 5$
are $0, 2T/5, 4T/5, T/5, 3T/5$, corresponding to the critical eigenvalue pair
$\mu_2,\mu_3$ (where $\mu_3=\bar\mu_2$). This is in accordance with the `first bifurcation'
condition of Theorem \ref{T:firstHopfNN}. Time reversal leads to the reverse order $0, 3T/5, T/5, 4T/5, 2T/5$.

There is no oscillatory fully synchronous state, since nodes are 1-dimensional.
Phase shifts $0, T/5, 2T/5, 3T/5, 4T/5$ and the reverse can occur, but
not stably.
\end{example}

Again there is a further symmetry $x \mapsto -x$: see Section \ref{S:ER}.

Longer range connections or higher-dimensional node spaces allow other oscillatory 
modes to be the first bifurcation. See Sections \ref{S:LRC} and \ref{S:MNS}.

\section{Hopf Bifurcation in the NN Unidirectional Ring}
\label{S:HBNNUR}

When the symmetry group $\Gamma$ is abelian, and in particular
when it is cyclic, generic Hopf bifurcation in a $\Gamma$-equivariant system
occurs at {\em simple} eigenvalues, because irreducibles are 1-dimensional over
$\C$, hence either 1-dimensional over $\R$ or  2-dimensional over $\R$
with eigenvalues $\pm \ii \omega$ with $\omega \neq 0$. Thus the
classical Hopf Bifurcation Theorem applies. At first sight this might
seem to make the Equivariant Hopf Theorem superfluous, but it is not.
It provides extra information on the spatiotemporal
symmetries of the bifurcating branch of periodic solutions.

To see how this occurs, we
apply the Equivariant Hopf Theorem to the $\Z_n$-symmetric NN-coupled
unidirectional ring. The symmetry group is $\Gamma = \Z_n$ acting on
$\R^n$ by permuting coordinates according to the $n$-cycle $\alpha$,
and $\Z_n = \langle \alpha \rangle$. Extend the base field to $\C$ so that
$\Gamma$ acts on $\C^n$. The $\C$-irreducible subspaces are the
eigenspaces $W_k =\C\{v_k\}$.
The $\R$-irreducible subspaces are the real eigenspaces, which
are the real parts $V_k$ of $\C\{v_k , v_{n-k}\} = w_k+W_{n-k}$. 
The Jacobian $\DD f$ is $a_0I+a_1A$
for $a_0,a_1 \in \R$. The eigenvalues on $V_k$ are $a_0 + a_1\zeta^k$ and $a_0 + a_1\zeta^{n-k}$.
That is,
\[
a_0 + a_1(\cos \frac{2k\pi}{n} \pm \ii \sin\frac{2k\pi}{n})
\]
For Hopf bifurcation, 
\[
0 = a_0 + a_1\cos \frac{2k\pi}{n}
\]
and the eigenvalues are $\pm \ii \omega$ where 
\[
\omega = |a_1 \sin\frac{2k\pi}{n}| = |a_1|
\]
since $0 \leq k \leq n/2$.
Here we take the absolute value because,
when defining the $\Sone$-action, we take the value of
$\omega$ to be positive. (This is important later when we consider
the direction in which the wave rotates.)

Working on $W_k$, the cycle $\alpha$ acts as the matrix $A$,
and on $W_k$ this restricts to 
\[
A_k = \Matrix{\zeta^k & 0  \\ 0 & \zeta^{-k}}
\]
The $\Sone$-action induced from Liapunov-Schmidt reduction 
is given by $\theta$ acting as
\[
M_\theta = \exp(A \theta /\omega) = \exp\Matrix{\pm\ii \theta & 0 \\ 0 & \mp\ii\theta} 
	= \Matrix{\ee^{\pm\ii \theta} & 0 \\ 0 &\ee^{\mp\ii\theta}}
\]
where the sign $\pm$ is that of $b$ and $\mp$ is the opposite sign.
Therefore $(\alpha,\theta)$ acts as
\[
A M_\theta = \Matrix{\ee^{\ii (\frac{2k\pi}{n}\pm\theta)} & 0 \\ 0 &\ee^{ -\ii(\frac{2k\pi}{n}\mp\theta)}}
\]
This gives a 2-dimensional fixed-point space (namely the whole of $V_k$) 
if and only if
\[
\theta = \mp \frac{2k\pi}{n}
\]
When $k \neq 0$ this is the rotating wave condition; when $k=0$ it gives a fully
synchronous standing wave. In either case the solution is fixed by
$(\alpha, \mp \frac{2k\pi}{n})$.

Moreover, we can read off the direction of rotation. It is determined by
the sign of  $\sin\frac{2k\pi}{n}$.

\begin{theorem}
\label{T:ZnHopf}
Let $\GG$ be a unidirectional ring of $1$-dimensional nodes 
with $\Z_n$ symmetry, obeying the ODE \eqref{E:Un_ODE}.
Suppose the critical eigenvalues are those on $V_k$,
where $k \neq 0$ and $k \neq n/2$ for $n$ even. Then generically there
is a bifurcating branch of periodic solutions of the form
\begin{equation}
\label{E:rot_wave_k}
x_j(t) = \phi(t\pm jT/k)
\end{equation}
for a $T$-periodic function $\phi$. (Here we take the same sign $+$ or $-$ throughout.)

The period $T$ converges to $2\pi/|\omega|$ at the bifurcation point. 
The sign in \eqref{E:rot_wave_k} is that of $a_1$.
\end{theorem}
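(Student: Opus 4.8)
The plan is to obtain the result as a direct application of the Equivariant Hopf Theorem (Theorem~\ref{T:EHT}), with the representation-theoretic input supplied by the eigenvalue and $\Sone$-action computations already carried out in the paragraphs immediately preceding the statement; the proof is really a matter of assembling those computations into the correct isotropy-subgroup framework. First I would record that the critical eigenspace is the real $2$-dimensional space $V_k$ associated with $W_k = \C\{v_k\}$ and its conjugate $W_{n-k}$. Since $\Gamma = \Z_n$ is abelian and $k \neq 0$, $k \neq n/2$, this $V_k$ affords a non-absolutely irreducible representation of complex type, hence is $\Gamma$-simple. This is exactly the hypothesis under which the Equivariant Hopf Theorem operates, and it also explains why the critical eigenvalues genuinely form a pair $\pm\ii\omega$ with $\omega = |a_1\sin\tfrac{2k\pi}{n}| \neq 0$.

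Next I would identify the $\C$-axial isotropy subgroup. Because $\dim_\R V_k = 2$, a subgroup $\Sigma \subseteq \Z_n \times \Sone$ is $\C$-axial (Definition~\ref{D:C-axial}) precisely when $\Fix(\Sigma) = V_k$, i.e.\ when $\Sigma$ acts trivially on all of $V_k$. Using the computed actions---$\alpha$ acting on $W_k$ as multiplication by $\zeta^k$ and $\theta \in \Sone$ acting as $M_\theta$---the requirement that $(\alpha,\theta)$ fix $V_k$ becomes $A M_\theta = I$ on $W_k$, which forces $\theta = \mp\tfrac{2k\pi}{n}$. This determines a twisted subgroup $\Sigma = \{(\alpha^j,\, \mp\tfrac{2kj\pi}{n}) : 0 \le j < n\}$ in the sense of \eqref{E:twisted}, with $H = \Z_n$. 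The Equivariant Hopf Theorem then yields, generically (the eigenvalue crossing condition being the only remaining hypothesis, and it holds for generic one-parameter families), a bifurcating branch of periodic solutions with $\Sigma$ as spatiotemporal symmetry group and period tending to $2\pi/|\omega|$.

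Finally I would translate the abstract symmetry into the waveform statement. The relation expressing that $(\alpha, \theta_0)$ fixes $x(t)$, with $\theta_0 = \mp\tfrac{2k\pi}{n}$ normalised to period $2\pi$, says that applying $\alpha$ to the solution coincides with a phase shift; converting to period $T$ and unwinding the $\Sone$-action by phase shifts gives $x_{j+1}(t) = x_j(t \mp \tfrac{kT}{n})$, and iterating from $\phi := x_0$ produces the form \eqref{E:rot_wave_k}. For the direction of rotation, the sign in $M_\theta$ is that of $b := a_1 \sin\tfrac{2k\pi}{n}$; since $0 < k < n/2$ forces $\sin\tfrac{2k\pi}{n} > 0$, this sign is exactly that of $a_1$, which is precisely the sign propagated into \eqref{E:rot_wave_k}.

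I expect the genuine difficulty to lie not in any single computation---each is short, and several are already in hand---but in the consistent bookkeeping of signs across three separate conventions: the phase-shift sign convention of Definition~\ref{D:phaseshift}, the orientation of the $\Sone$-action $\exp(J\theta/\omega)$ inherited from Liapunov--Schmidt reduction, and the sign of $b$ that fixes the direction of that action. Getting the composite sign right is what determines whether the wave rotates with or against the node numbering, so I would verify the final sign against the explicit $\Z_3$ and $\Z_5$ simulations (Figures~\ref{F:Z3hopf} and~\ref{F:Z5hopf}) as a consistency check.
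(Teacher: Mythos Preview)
Your proposal is correct and follows essentially the same route as the paper: identify $V_k$ as a non-absolutely irreducible (hence $\Gamma$-simple) critical eigenspace, compute the $(\alpha,\theta)$-action to locate the $\C$-axial twisted subgroup, and invoke the Equivariant Hopf Theorem. You are in fact more thorough than the paper's own proof, which defers the sign analysis to the later Section~\ref{S:DR}, whereas you handle it in place via the sign of $a_1\sin\tfrac{2k\pi}{n}$.
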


\begin{proof}
The spaces $V_k$ are the irreducible subspaces for the $\Z_n$-action.
When $k \neq 0$ and $k \neq n/2$ for $n$ even, these are non-absolutely
irreducible and of complex type. Over $\R$ they have dimension 2.

The complexified actions of $\alpha \in \Z_n$
and $\theta \in \Sone$ are by the matrices
\[
\alpha = \Matrix{\ee^{2\pi \ii/n} & 0 \\ 0 & \ee^{-2\pi \ii/n}}
\qquad 
\theta = \Matrix{\ee^{\ii \theta} & 0 \\ 0 & \ee^{- \ii\theta}}
\]
Therefore $(\alpha,-2\pi/n)$ acts as the identity. Taking the real part, the group
\[
\tilde\Z_3 = \langle (\alpha,-2\pi/n) \rangle  
\]
is $\C$-axial.
By the equivariant Hopf Theorem, there is a bifurcating branch of periodic states
with spatiotemporal symmetry group $\tilde\Z_n$. On $V_k$ this gives rise
to solutions of the form \eqref{E:rot_wave_k}. (The $\pm$ sign in \eqref{E:rot_wave_k} is explained
in Section \ref{S:DR}.)
\end{proof}

\subsection{Examples \ref{ex:Z31D} and \ref{ex:Z51D} Revisited}
\label{S:ER}

The ODE \eqref{E:Z31D} is also equivariant under $x \mapsto -x$, so the
symmetry group is actually $\Z_3\times\Z_2 \cong \Z_6$. 
In the Equivariant Hopf Theorem, this
extra symmetry identifies with a half-period phase shift, since both
induce minus the identity. We do not get extra solution branches, but
now all solutions are fixed by the `glide reflection' symmetry
$x(t) \mapsto -x(t-T/2)$ of the time series. To destroy this symmetry we can add 
a small quadratic term. The rotating wave state persists.

Similarly, Example \ref{ex:Z51D} is also equivariant under $x \mapsto -x$. So the
symmetry group is actually $\Z_5\times\Z_2 \cong \Z_{10}$. 
Again, solutions are fixed by $x(t) \mapsto -x(t-T/2)$.

\subsection{Longer-Range Coupling}
\label{S:LRC}

We can consider rings with the same cyclic group symmetry
but longer-range coupling, and now the first bifurcation
is less tightly constrained. The results in this section appear to be new.

\begin{figure}[htb]
      \centerline{%
 \includegraphics[width=.3\textwidth]{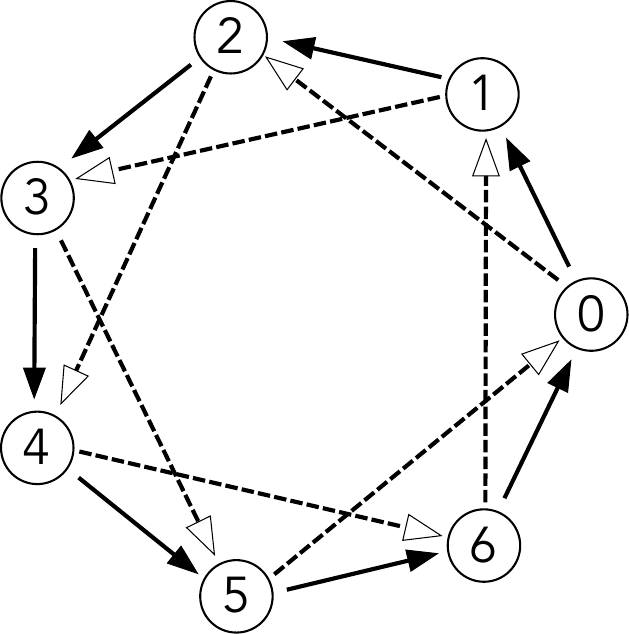}
}
        \caption{A $7$-node ring with NN (solid arrows) and NNN (dashed arrows)
        coupling.}
        \label{F:7gon_non_NN}
\end{figure}

For example, Figure \ref{F:7gon_non_NN} shows a $\Z_7$-symmetric ring
with both nearest neighbour (NN) and next-nearest neighbour (NNN) coupling.
More generally we can consider $r$th nearest neighbour coupling ($r$NN), in which
node $i$ receives an arrow from node $i-r \pmod{n}$. We omit a drawing, 
which would be very cluttered.

The adjacency matrix for identical $r$NN coupling is $A^r$ where $A$ 
is stated in \eqref{E:lin_admiss}. If couplings of all ranges from $0$ to $n-1$
are included, we obtain the {\em group network} for $\Z_n$ acting on
the ring \cite{AS07}. The linear admissible maps are those of the form
\begin{equation}
\label{E:uni_ring_lin}
L = a_0 I + a_1 A + a_2 A^2+ \cdots + a_{n-1} A^{n-1}
\end{equation}
giving the {\em circulant} matrix
\[
L = \Matrix{a_0 & a_1 & a_2 & \ldots & a_{n-1} \\
	a_{n-1} & a_0 & a_1 & \ldots & a_{n-2} \\
	a_{n-2} & a_{n-1} & a_0 & \ldots & a_{n-3} \\
	\vdots & \vdots & \vdots & \ddots & \vdots \\
	a_1 & a_2 & a_3 & \ldots & a_0}
\]
Again the eigenvectors are the $v_k$, but now the eigenvalues are:
\begin{equation}
\label{E:lambda_k}
\mu_k = a_0  + a_1 \zeta^{k} + a_2  \zeta^{2k} + \cdots + a_{n-1}  \zeta^{(n-1)k} 
\end{equation}
The conditions for first bifurcation are now combinatorially more complex,
as as far as we are aware have not previously been studied. We begin
with a simple example.

\begin{example}\em
\label{ex:4nonNN}

Let $n=4$, so $\zeta = \ii$. Theorem \ref{T:firstHopfNN} implies that with only
NN coupling, the first bifurcation cannot be Hopf. We show that this restriction no 
longer holds if longer range couplings occur.

The eigenvalues  $\mu_j$ and their real parts $\rho_j$ are:
\beqn
\mu_0 &=& a_0 + a_1 + a_2 + a_3 \ \qquad \rho_0 = a_0 + a_1 + a_2 + a_3\\
\mu_1 &=& a_0 + a_1\ii - a_2 - a_3\ii \qquad  \rho_1 = a_0  - a_2  \\
\mu_2 &=& a_0 - a_1 + a_2 - a_3 \ \qquad \rho_2 = a_0 - a_1 + a_2 - a_3\\
\mu_3 &=& a_0 - a_1\ii - a_2 + a_3 \ii \qquad  \rho_3 = a_0  - a_2 
\eeqn
There are three distinct real parts $\rho_0,\rho_1,\rho_2$. Transitions in the ordering occur when:
\beqn
\rho_0 = \rho_1: && a_1+2a_2+a_3=0\\
\rho_0 = \rho_2: &&2a_1+a_3=0 \\
\rho_1 = \rho_2: && a_1-2a_2 = 0
\eeqn
The term $a_0$ serves only to translate the values, so it plays no role in the ordering.

These equations define three planes in  $(a_1,a_2,a_3)$ parameter space.
They meet along a common line because the three equations are linearly dependent.
We can therefore take a cross-section, say at $a_1=0$. The result is
Figure \ref{F:4nodeEVorder}. In particular we see that the eigenvalues can occur
in any of the six possible orders, and each order corresponds to a connected
component of the complement of the three transition planes.

\begin{figure}[htb]
      \centerline{%
 \includegraphics[width=.4\textwidth]{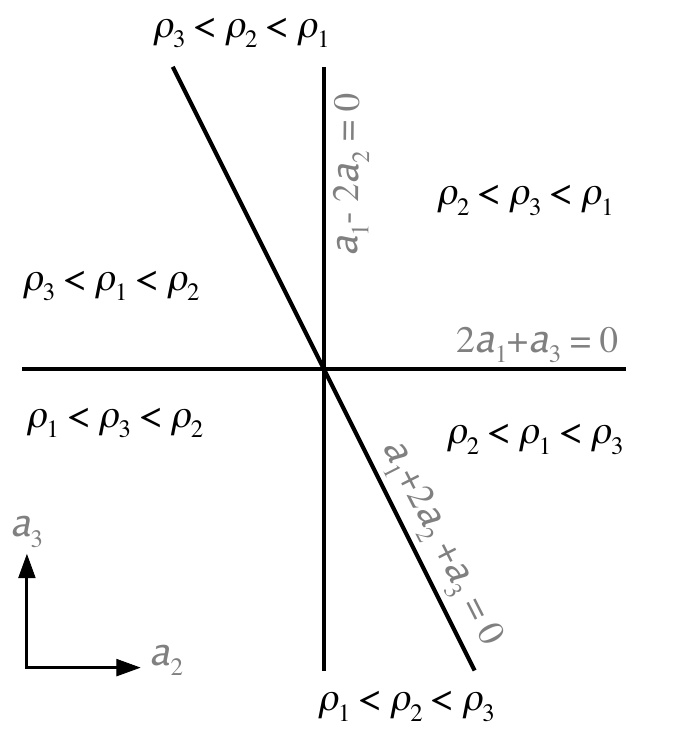}
}
        \caption{Section at $a_1=0$ of  $(a_1,a_2,a_3)$ parameter space,
        showing ordering of real parts of eigenvalues.}
        \label{F:4nodeEVorder}
\end{figure}
\end{example}

\begin{remark}\em
Suppose arrows have range 0, 1, 2, but not 3. Then the above analysis applies 
with $a_3=0$. The transitions occur when
\beqn
\rho_0 = \rho_1: && a_1+2a_2=0\\
\rho_0 = \rho_2: &&a_1=0 \\
\rho_1 = \rho_2: && a_1-2a_2 = 0
\eeqn
and again all possible orderings can occur. Thus connections of all ranges
are not necessary to ensure that all possible orderings can occur.
\end{remark}


\subsection{Ordering of Eigenvalues}

When seeking possible orderings of the eigenvalues, the
geometric approach of Example \ref{ex:4nonNN} is too complicated
in general. However, we can prove one useful result: 
If a $\Z_n$-symmetric network has couplings
of all possible ranges, its admissible ODEs admit any ordering of the real parts
of eigenvalues. In particular, any oscillatory mode can be the first bifurcation
for a suitable admissible/equivariant ODE.

To prove this we begin with some standard concepts from ring
theory, and make some important, though obvious and pedantic, distinctions.

Let $t$ be an indeterminate. The {\em polynomial ring} $\C[t]$ consists of
all {\em polynomial expressions} 
\begin{equation}
\label{E:poly_exp}
p = p_0 + p_1 t + p_2 t^2 + \cdots + p_kt^k \qquad (p_j \in \C, 0 \leq j \leq k)
\end{equation}
added and multiplied in the obvious manner. The corresponding {\em polynomial
function} has the form
\[
z \mapsto p_0 + p_1 z + p_2 z^2 + \cdots + p_kz^k \qquad (z \in \C)
\]
We can identify this function with $p$, writing $z \mapsto p(z)$, and use
the same term `{\em polynomial}' for both the formal expression and the
corresponding function.

The ring $\C[t]$ contains the subring 
\[
\R[t]= \{p: p_0, p_1, \ldots,p_k \in \R\}
\]
of polynomials with real coefficients.

The {\em complex conjugate} $\bar p$ of $p$ is defined by
\[
p = \bar p_0 + \bar p_1 t + \bar p_2 t^2 + \cdots + \bar p_kt^k  \in \C[t]
\]
where the bar denotes the usual conjugate in $\C$. An obvious but important
feature is that when $z \in \C$, then in  general $\bar p(z)$ is {\em not}
the same as $\overline{p(z)}$. In fact,
\[
\overline{p(z)} = \bar p(\bar z) \qquad (z \in \C)
\]
and the two sides can differ.

The next result is trivial but crucial:
\begin{lemma}
\label{L:phi+phi_bar}
If $p \in \C[t]$ then $p + \bar p \in \R[t]$.
\end{lemma}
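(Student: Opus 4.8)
The plan is to verify directly that the polynomial expression $p + \bar p$ has real coefficients, which is the defining property of membership in $\R[t]$. I would work with the formal expressions rather than the associated functions, since the lemma is a statement about coefficients. Writing $p = \sum_{j=0}^k p_j t^j$ with $p_j \in \C$, the complex conjugate is $\bar p = \sum_{j=0}^k \bar p_j t^j$ by the definition given just before the lemma.

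The single key step is to add these two expressions coefficient by coefficient. Since addition in $\C[t]$ is performed termwise, we obtain
\[
p + \bar p = \sum_{j=0}^k (p_j + \bar p_j)\, t^j .
\]
The heart of the matter is then the elementary fact that for any complex number $w$, the sum $w + \bar w = 2\,\Re(w)$ is real. Applying this with $w = p_j$ shows that each coefficient $p_j + \bar p_j$ lies in $\R$, so every coefficient of $p + \bar p$ is real. By the definition of the subring $\R[t]$ as those polynomials all of whose coefficients are real, this gives $p + \bar p \in \R[t]$, as required.

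Honestly, there is no genuine obstacle here; the paper itself flags the result as ``trivial but crucial.'' The only point demanding any care is the pedantic distinction the paper has just emphasised between a polynomial expression and its associated function: one must resist the temptation to argue via the identity $\overline{p(z)} = \bar p(\bar z)$, which concerns function values and is explicitly noted to differ from $\overline{p(z)}$ in general. The clean argument stays entirely at the level of formal coefficients and never evaluates at a point, so it sidesteps that subtlety. I would therefore keep the proof to the two displayed lines above, emphasising that conjugation acts termwise on coefficients and that $p_j + \bar p_j \in \R$.
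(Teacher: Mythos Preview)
Your argument is correct and is essentially identical to the paper's own proof: both write $p+\bar p = \sum_{j=0}^k (p_j+\bar p_j)t^j$ and conclude from $p_j+\bar p_j\in\R$. The only cosmetic difference is that you spell out $p_j+\bar p_j = 2\,\Re(p_j)$, which the paper leaves implicit.
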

\begin{proof}
With coefficients $p_j$ as in \eqref{E:poly_exp}, 
\[
(p + \bar p) (t) =p(t) + \bar p (t) = \sum_{j=0}^k (p_j + \bar p_j)t^j \in \R[t]
\]
since $p_j + \bar p_j \in \R$.
\end{proof}

With nodes $0, 1, \ldots, n-1$ define
\[
N = \lfloor n/2 \rfloor 
\]
Then the real parts of the eigenvalues are $\mu_0 , \ldots, \mu_N$,
and each singleton or conjugate pair occurs exactly once in this list.
We now prove:

\begin{theorem}
\label{T:anyvalue}
Let $\alpha_0, \ldots, \alpha_N \in \R$. Then there exists a polynomial
$\phi(t)$ of degree $n$ in $\R[t] \subseteq \C[t]$ such that 
\begin{equation}
\label{E:interpolation}
\phi(\zeta^j) = \alpha_j = \phi(\bar \zeta^j) \qquad (0 \leq j \leq N)
\end{equation}
\end{theorem}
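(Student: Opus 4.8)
The plan is to read \eqref{E:interpolation} as a Lagrange interpolation problem at the $n$ distinct nodes $\zeta^0, \zeta^1, \ldots, \zeta^{n-1}$, and then to exploit the conjugation symmetry of the prescribed data to force the interpolant to have real coefficients. First I would extend the given list $\alpha_0, \ldots, \alpha_N$ to a value at every $n$th root of unity: set $\beta_k = \alpha_k$ for $0 \leq k \leq N$ and $\beta_k = \alpha_{n-k}$ for $N < k \leq n-1$ (the index $n-k$ then lies in $\{1,\ldots,N\}$, so this is well defined). Since $\overline{\zeta^k} = \zeta^{n-k}$, condition \eqref{E:interpolation} is precisely the requirement that $\phi(\zeta^k) = \beta_k$ for all $k$, and by construction the data is symmetric, $\beta_{n-k} = \beta_k$, with $\beta_0$ attached to the real node $\zeta^0 = 1$ and (when $n$ is even) $\beta_{n/2}$ to the real node $\zeta^{n/2} = -1$. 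Because the nodes are distinct, standard Lagrange interpolation yields a unique $p \in \C[t]$ with $\deg p \leq n-1$ and $p(\zeta^k) = \beta_k$ for $0 \leq k \leq n-1$.

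The crux is to show this $p$ already lies in $\R[t]$. Using the identity $\overline{p(z)} = \bar p(\bar z)$ noted just before Lemma \ref{L:phi+phi_bar}, I would evaluate $\bar p$ at each node: $\bar p(\zeta^k) = \overline{p(\overline{\zeta^k})} = \overline{p(\zeta^{n-k})} = \overline{\beta_{n-k}} = \beta_{n-k} = \beta_k$, where the last two equalities use that the $\beta_k$ are real and symmetric under $k \mapsto n-k$. Thus $\bar p$ interpolates exactly the same data as $p$, and by uniqueness of the Lagrange interpolant $\bar p = p$; hence every coefficient of $p$ equals its own conjugate and $p \in \R[t]$. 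Equivalently, one can set $q = \tfrac12(p + \bar p)$, which lies in $\R[t]$ by Lemma \ref{L:phi+phi_bar}, and the same computation gives $q(\zeta^k) = \beta_k$, so $q$ is a real interpolant either way.

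To arrange the degree to be exactly $n$ rather than merely $\leq n-1$, I would add a nonzero real multiple of $t^n - 1 = \prod_{k=0}^{n-1}(t - \zeta^k)$. This polynomial vanishes at every node, so it preserves all the interpolation conditions in \eqref{E:interpolation}, while its $t^n$ term raises the degree to exactly $n$ and leaves the coefficients real. (Note that adding $t^n - 1$ has no effect on the associated circulant operator, since $A^n = I$, which is consistent with the degree $n$ in the statement being essentially a matter of normalisation of the polynomial \eqref{E:lambda_k} modulo $t^n - 1$.)

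I expect no genuine obstacle: existence and uniqueness of the Lagrange interpolant and the final degree adjustment are routine, and the only substantive point is the reality of the coefficients. That point is settled entirely by two structural facts already in place — the set $\{\zeta^k\}$ is closed under complex conjugation, and the target data satisfies $\beta_{n-k} = \beta_k$ with all values real — after which uniqueness of the interpolant forces $\bar p = p$. The role of Lemma \ref{L:phi+phi_bar} is exactly to package this reality argument in the cleanest form.
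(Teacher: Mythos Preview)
Your proof is correct and follows essentially the same route as the paper: interpolate at the $n$th roots of unity and then use the conjugation symmetry of the data together with Lemma~\ref{L:phi+phi_bar} to obtain real coefficients. The paper halves the prescribed values, interpolates to get $\psi$, and sets $\phi=\psi+\bar\psi$, whereas you interpolate the full values and invoke uniqueness to conclude $\bar p=p$ (and also note the equivalent averaging $q=\tfrac12(p+\bar p)$); these are the same idea in slightly different packaging, and your extra step of adding a real multiple of $t^n-1$ to force the degree to be exactly $n$ is a detail the paper's proof does not make explicit.
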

\begin{proof}
First observe that if $n$ is even then $\zeta^N = \bar \zeta^N$ so 
\eqref{E:interpolation} imposes $n$ conditions on $\phi$.
By polynomial interpolation there is a polynomial
$\psi \in \C[t]$ such that
\[
\psi(\zeta^j) = \shf\alpha_j = \psi(\bar \zeta^j) \qquad (0 \leq j \leq N)
\]
The coefficients of $\psi$ need not be real, however, so we define
\[
\phi = \psi + \bar\psi
\]
which lies in $\R[t]$ by Lemma~\ref{L:phi+phi_bar}. Now, for $0 \leq j \leq N$,
\beqn
&& \phi(\zeta^j) = \psi(\zeta^j)+ \bar\psi(\zeta^j) = \psi(\zeta^j)+ \overline{\psi(\bar\zeta^j)}
	=\shf\alpha_j +\shf\alpha_j = \alpha_j \\
&& \phi(\bar\zeta^j) = \psi(\bar\zeta^j)+ \psi(\zeta^j) = \psi(\bar\zeta^j)+ \overline{\psi(\zeta^j)}
	=\shf\alpha_j +\shf\alpha_j = \alpha_j 
\eeqn
as required.
\end{proof}

\begin{corollary}
\label{C:anyorder}
The real parts of the eigenvalues of $L$ can occur in any order
for suitable choices of the connection strengths $a_j$.
\end{corollary}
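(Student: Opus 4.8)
The plan is to read the corollary off as an essentially immediate consequence of Theorem~\ref{T:anyvalue}, the only work being to translate ``prescribed real parts'' into ``prescribed ordering'' and to reconcile the degree-$n$ interpolating polynomial produced by that theorem with the degree-$(n-1)$ circulant structure of $L$ in \eqref{E:uni_ring_lin}.

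First I would fix a target ordering of the $N+1$ distinct real parts, that is, a permutation $(i_0,\ldots,i_N)$ of $(0,1,\ldots,N)$ specifying that $\Re(\mu_{i_0})$ is to be smallest, $\Re(\mu_{i_1})$ next, and so on. To realise it I would simply choose distinct real numbers $\alpha_0,\ldots,\alpha_N$ arranged in that order, for instance $\alpha_{i_m}=m$. Since the $\alpha_j$ may be assigned in any arrangement, every one of the $(N+1)!$ possible orderings is attainable at this step, and this is where the freedom to prescribe the ordering actually enters.

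Next I would invoke Theorem~\ref{T:anyvalue} to obtain a polynomial $\phi\in\R[t]$ of degree $n$ with $\phi(\zeta^j)=\alpha_j$ for $0\le j\le N$; because $\phi$ has real coefficients we automatically get $\phi(\bar\zeta^j)=\overline{\phi(\zeta^j)}=\alpha_j$, so the eigenvalues pair up correctly as $\mu_{n-k}=\overline{\mu_k}$ and the distinct real parts are indeed indexed by $0\le k\le N$. The one bookkeeping point is that $L=a_0I+\cdots+a_{n-1}A^{n-1}$ corresponds to a polynomial of degree at most $n-1$, whereas $\phi$ has degree $n$. I would resolve this using $\zeta^n=1$: writing $\phi(t)=\sum_{j=0}^{n}c_jt^j$ with $c_j\in\R$, the values on $n$th roots of unity are unchanged if I set $a_0=c_0+c_n$ and $a_j=c_j$ for $1\le j\le n-1$, since $c_n\zeta^{nk}=c_n$. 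These real numbers $a_j$ are the sought connection strengths, and reality is preserved because $\phi$ already had real coefficients.

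With this choice, formula~\eqref{E:lambda_k} gives $\mu_k=a_0+a_1\zeta^k+\cdots+a_{n-1}\zeta^{(n-1)k}=\phi(\zeta^k)=\alpha_k$ for $0\le k\le N$, so the real parts of the eigenvalues of $L$ equal $\alpha_0,\ldots,\alpha_N$ and therefore occur in the prescribed order. I do not expect any genuine obstacle here: the substance lives entirely in Theorem~\ref{T:anyvalue} (itself resting on Lemma~\ref{L:phi+phi_bar}) together with the fact that interpolation at the $n$ distinct $n$th roots of unity imposes no compatibility constraints, so the only care required is the harmless reduction modulo $t^n-1$ and the observation that assigning the target values in any arrangement yields any ordering.
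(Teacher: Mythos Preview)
Your proposal is correct and follows essentially the same route as the paper: choose real targets $\alpha_j$ realising the desired ordering, invoke Theorem~\ref{T:anyvalue}, and read off the $a_j$ as the coefficients of $\phi$. The only difference is that you explicitly handle the degree-$n$ versus degree-$(n-1)$ bookkeeping via reduction modulo $t^n-1$, whereas the paper simply takes the $a_j$ to be the coefficients of $\phi$ without comment; your extra care here is warranted given the stated degree in Theorem~\ref{T:anyvalue}, though in fact interpolation at the $n$ roots of unity already yields $\phi$ of degree at most $n-1$, making the reduction step vacuous in practice.
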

\begin{proof}
Let $\pi$ be any permutation of $\{0,1,\ldots N\}$. Set $\alpha_j = -\pi(j)$
in Theorem \ref{T:anyvalue} and let the $a_j$ in $L$ be the coefficients
of the resulting polynomial $\phi \in \R[t]$. Then
\[
\Re(\mu_j) < \Re(\mu_l) \iff \alpha_j > \alpha_l
\]
\end{proof}

This result also proves that, generically, the eigenvalues are 
all simple and there are no imaginary ones except $\pm \ii\omega$. Thus these
nondegeneracy conditions for Hopf bifurcation hold generically. The
remaining nondegeneracy condition is the eigenvalue crossing condition,
which is generic since it can be arranged by adding the
equivariant and admissible term $\alpha x$ to the ODE,
where $\alpha$ is small.

\subsection{Direction of Rotation}
\label{S:DR}

An issue that has already been raised concerns the direction in which a rotating wave
solution rotates. The $H/K$ classification does not distinguish these;
indeed, for (say) $n=5$ all rotating waves have $H=\Z_5, K=\ONE$, but there 
are four distinct phase patterns with phase shifts $T/5, 2T/5,3T/5,4T/5$.
These occur in pairs $\pm1/5$ and $\pm2/5$. For each irreducible 
representation over $\R$, precisely one from each pair can occur.

The twisted isotropy subgroup $H^{\phi}$ of \eqref{E:twisted} 
does distinguish the rotation directions,
Because $H^{\theta}$ and $H^{-\theta}$ are generally different.
However, in $\Z_n$-equivariant Hopf bifurcation the same critical eigenspace
$U_k = \Re(V_k + V_{n-k})$ supports only one phase pattern,
which may rotate in either direction depending on the admissible ODE.
We give a criterion to decide which of them occurs.

Assume there is a Hopf bifurcation whose critical eigenspace
corresponds to the complex conjugate pair of eigenvectors $v_k$ and $v_{n-k}$,
where $k \neq 0$ and if $n$ is even, $k \neq n/2$. 
The eigenvalues are generically simple, so the standard Hopf Theorem
predicts a {\em unique} branch of periodic states. The Equivariant Hopf Theorem
adds information about the spatio-temporal symmetry of
the solution along this branch. The
branch consists of rotating waves, in which the $n$-cycle $\alpha$
produces a phase shift of $\pm kT/n$, so that
\begin{equation}
\label{E:rot_wave}
x_{j+1}(t) = x_j(t \pm \frac{kT}{n})
\end{equation}
In this case the oscillation at node $j+1$ is phase-shifted by $\mp \frac{2k\pi}{T}$
from the oscillation at node $j$, so the signs are reversed.

The solution branch therefore consists of either a {\em clockwise} 
rotating wave, when the sign in \eqref{E:rot_wave} is negative,
or an {\em anticlockwise}
rotating wave, when the sign in \eqref{E:rot_wave} is positive.
A natural question is: Which sign occurs for a given admissible ODE?

A clockwise rotating wave for $k$ can also be 
interpreted as an anticlockwise one for $n-k$. But it differs from
an anticlockwise one for $k$, so the question is meaningful.

We show that the answer depends on the sign of the imaginary part of
the critical eigenvalue. The proof is based on \cite[Proposition 4.21]{GS02},
which is the special case $n=3$.

Recall from \eqref{E:lambda_k} that the eigenvalues are
\begin{equation}
\label{E:lambda_k2}
\mu_k = a_0  + a_1 \zeta^{k} + a_2  \zeta^{2k} + \cdots + a_{n-1}  \zeta^{(n-1)k} 
\end{equation}
Write $\mu_k = \rho_k + \ii \sigma_k$.
Now
\beqn
\Re(\mu_k) & = & \sum_{j=0}^n a_j \cos \frac{2kj\pi}{n}  = \rho_k\\
\Im(\mu_k) & = & \sum_{j=0}^n a_j \sin \frac{2kj\pi}{n} = \sigma_k
\eeqn
At Hopf bifurcation $\rho_k =0$, so $\mu_k = \ii \sigma_k$.
Moreover, $\sigma_k \neq 0$.

\begin{theorem}
\label{T6.11}
Let $0 \leq k \leq N$.
Consider Hopf bifurcation with critical eigenspace given by the
eigenvalue $\mu_k$. The the direction of rotation is clockwise 
if and only if $\sigma_k < 0$, and anticlockwise 
if and only if $\sigma_k > 0$. 
\end{theorem}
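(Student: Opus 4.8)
The plan is to reuse the linear analysis of the $\Z_n$-symmetric ring already set up in Section~\ref{S:HBNNUR} and in the proof of Theorem~\ref{T:ZnHopf}, but now to track the sign of $\sigma_k$ throughout; this generalises the case $n=3$ of \cite[Proposition 4.21]{GS02}. First I would record the linear data on the critical eigenspace. Complexifying, the critical space is $W_k = \C\{v_k\}\oplus\C\{v_{n-k}\}$, on which $\alpha$ acts as $\mathrm{diag}(\zeta^k,\zeta^{-k})=\mathrm{diag}(\ee^{2\pi\ii k/n},\ee^{-2\pi\ii k/n})$, while the Jacobian acts as $\mathrm{diag}(\mu_k,\mu_{n-k})$. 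Since $v_{n-k}=\overline{v_k}$ and $f$ is real we have $\mu_{n-k}=\overline{\mu_k}$, so at the bifurcation point (where $\rho_k=0$) this is $\mathrm{diag}(\ii\sigma_k,-\ii\sigma_k)$. Because the convention of Section~\ref{S:HBNNUR} fixes $\omega=|\sigma_k|>0$, the circle action produced by Liapunov--Schmidt reduction is $\theta\mapsto\exp(\theta J/|\sigma_k|)=\mathrm{diag}(\ee^{\ii\theta\,\mathrm{sgn}\,\sigma_k},\ee^{-\ii\theta\,\mathrm{sgn}\,\sigma_k})$.

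Next I would identify the $\C$-axial isotropy subgroup. Composing the two actions, $(\alpha,\theta)$ acts on $W_k$ as $\mathrm{diag}(\ee^{\ii(2\pi k/n+\theta\,\mathrm{sgn}\,\sigma_k)},\ee^{-\ii(2\pi k/n+\theta\,\mathrm{sgn}\,\sigma_k)})$, and this is the identity---so that $\Fix$ is the whole $2$-dimensional space $V_k$---precisely when $\theta\,\mathrm{sgn}(\sigma_k)\equiv -2\pi k/n \pmod{2\pi}$, that is, for $\theta_0=-\mathrm{sgn}(\sigma_k)\,2\pi k/n$. Thus the generator of the $\C$-axial subgroup is $(\alpha,\theta_0)$, and the sign of $\theta_0$---hence, eventually, the direction of rotation---is governed entirely by $\mathrm{sgn}(\sigma_k)$. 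Up to this point the argument is identical to the proof of Theorem~\ref{T:ZnHopf}; the new content is keeping $\theta_0$ signed rather than absorbing it into a $\pm$.

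Finally I would translate the abstract spatio-temporal symmetry $(\alpha,\theta_0)$ of the bifurcating loop into the concrete phase relation between nodes. Writing out the fixed-point condition $(\alpha,\theta_0)\cdot x=x$ using $(\alpha y)_c=y_{c+1}$ for the spatial action together with the phase-shift action on loops, and converting $\theta_0\in\Sone=\R/2\pi\Z$ to the time variable through $\theta\mapsto\theta T/2\pi$, yields $x_{j+1}(t)=x_j(t+\mathrm{sgn}(\sigma_k)\,kT/n)$. This is exactly the case $\pm=\mathrm{sgn}(\sigma_k)$ of \eqref{E:rot_wave}. By the convention fixed just after \eqref{E:rot_wave}---a negative sign is a clockwise wave, a positive sign an anticlockwise wave, the nodes being numbered anticlockwise---this gives clockwise rotation iff $\sigma_k<0$ and anticlockwise iff $\sigma_k>0$, as required.

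I expect the only real obstacle to be the sign bookkeeping in this last paragraph, since a single inconsistency flips the conclusion. Three conventions must be held fixed simultaneously and consistently: (i) the coordinate permutation $(\alpha y)_c=y_{c+1}$; (ii) the direction of the phase-shift action, tied to Definition~\ref{D:phaseshift}; and (iii) the clockwise/anticlockwise labelling attached to the anticlockwise numbering of nodes. To guard against a global sign error I would first pin the conventions down on $W_3$, where the answer is recorded in \cite[Proposition 4.21]{GS02} and visible in Figure~\ref{F:Z3hopf}, and then verify that passing to general $(n,k)$ changes nothing except the replacement $2\pi/3\mapsto 2\pi k/n$.
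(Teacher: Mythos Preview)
Your proposal is correct and follows essentially the same route as the paper: diagonalise $\alpha$ and the Jacobian on the complexified critical eigenspace, write the induced $\Sone$-action via $\exp(\theta J/|\omega|)$, solve for the $\theta_0$ making $(\alpha,\theta_0)$ act trivially, and then read off the sign in \eqref{E:rot_wave}. Your version tracks $\mathrm{sgn}\,\sigma_k$ more explicitly than the paper's proof and is more careful about the three conventions you list, which is exactly the delicate point of this argument.
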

\begin{proof}
Use the eigenvector basis $\{u_k, u_{n-k}\}$ over $\C$ on the complexified
critical eigenspace. Then 
\[
\alpha = \Matrix{\zeta^k & 0 \\ 0 & \zeta^{n-k}} \qquad 
	L|_E = \Matrix{\ii \sigma & 0 \\ 0 & -\ii\sigma}
\] 
The direction of rotation is determined by the kernel of the 
action of the twisted group $\langle (\alpha,\theta)\rangle \subseteq \Z_n \times \Sone$, 
when $\theta$ is the phase shift (either $\frac{2\pi k}{T}$ or $-\frac{2\pi k}{T}$).
That is, we need to know whether
$\alpha = \ee^{2\pi kT/n}$ or $\alpha = \ee^{-2\pi kT/n}$ on $E$.

The $\Sone$-action is given by $\theta$ acts as $\exp(L|_E \theta)$.
This has matrix
\[
\exp(L|_E \theta) = \Matrix{\ee^{\ii \sigma} & 0 \\ 0 & \ee^{-\ii \sigma}}
\]
so either 
\beqn
\sigma &=& \sin \frac{2k\pi}{n} \\
\sigma &=& - \sin \frac{2k\pi}{n} 
\eeqn
Now
\beqn
\sin \frac{2k\pi}{n} > 0 &\mbox{when}& 1 \leq k \leq N \\
\sin \frac{2k\pi}{n} < 0 &\mbox{when}& N+1 \leq k \leq n-1 
\eeqn
We can consider only the case $1 \leq k \leq N$, to avoid the
ambiguity mentioned above. Thus the phase shift is given by the sign of $\sigma$.
If $\sigma>0$ then $\alpha$ identifies with a phase shift of $+kT/n$,
and if $\sigma<0$ then $\alpha$ identifies with a phase shift of $-kT/n$.

\end{proof}

\subsection{Stability}

Since generic $\Z_n$-equivariant Hopf bifurcation occurs at a simple
eigenvalue, the usual stability conditions for Hopf bifurcation without
symmetry apply: see \cite[Chapter 1 Section 4]{HKW81}
or \cite[Chapter VIII Section 4]{GS85}. 
Assume that the branch of equilibria concerned is linearly stable for 
$\lambda < \lambda_0$, so the first bifurcation is the Hopf bifurcation
under discussion. Then
supercritical branches, which exist for $\lambda > \lambda_0$
and $|\lambda-\lambda_0|$ small, are stable near the
bifurcation point, whereas subcritical branches, 
which exist for $\lambda < \lambda_0$
and $|\lambda-\lambda_0|$ small, are unstable.
(Degenerate cases may be neither supercritical nor subcritical. The
results of \cite{GL81} then apply.)

The direction of branching can be computed from the
Liapunov-Schmidt reduction or other dimension reduction methods
such as Poincar\'e-Birkhoff normal form or centre manifold reduction.
See \cite[Chapter 1 Sections 3--6]{HKW81} \cite[Chapter VIII Proposition 3.3]{GS85}.
Since admissible maps contain many quadratic and cubic terms,
it seems likely that the generic behaviour is nondegenerate, with either
a supercritical branch or a subcritical one. 

\section{Multidimensional Node Spaces}
\label{S:MNS}
We sketch how the results for 1-dimensional node spaces generalise
to $l$-dimensional node spaces $\R^l$ for $l > 1$. 
The state space is then $R^{ln}$, which we
can think of as the tensor product $R^l \otimes \R^n$. 
The Jacobian has the form $J=P\otimes I + Q\otimes A$ for arbitrary
$l \times l$ matrices $P, Q$. 

\begin{lemma}
\label{L:PI+QA}
If $y$ is an eigenvector of $P+\zeta^kQ$ with eigenvalue $\rho$,
then $y\otimes v_k$ is an eigenvector of $J$ with eigenvalue $\rho$.

A similar statement holds for generalised eigenvectors.
\end{lemma}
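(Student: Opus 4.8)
The plan is to reduce everything to the mixed-product rule for the Kronecker product, combined with the fact that each $v_k$ diagonalises the cyclic adjacency matrix $A$. First I would record that $A v_k = \zeta^k v_k$: since $A$ implements the shift $(Av)_j = v_{j+1}$ and $(v_k)_j = \zeta^{jk}$, one has $(Av_k)_j = \zeta^{(j+1)k} = \zeta^k (v_k)_j$, in agreement with the scalar formula $\mu_k = a_0 + a_1\zeta^k$ recorded earlier for $L = a_0 I + a_1 A$. As in that discussion we work over $\C$, since the $v_k$ have complex entries.

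The eigenvector claim is then a one-line computation. Using $(M\otimes N)(y\otimes w) = (My)\otimes(Nw)$,
\[
J(y\otimes v_k) = (P\otimes I + Q\otimes A)(y\otimes v_k) = (Py)\otimes v_k + (Qy)\otimes(\zeta^k v_k) = \big((P+\zeta^k Q)y\big)\otimes v_k .
\]
If $(P+\zeta^k Q)y = \rho y$, the right-hand side is $\rho(y\otimes v_k)$, so $y\otimes v_k$ is an eigenvector of $J$ with eigenvalue $\rho$.

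For the generalised eigenvectors I would repackage the same identity as an intertwining relation rather than expand $(J-\rho I)^m$ directly. Define $\iota_k(y) = y\otimes v_k$, an injective linear map $\C^l \to \C^l\otimes\C^n$ with image the $J$-invariant subspace $W_k = \C^l\otimes\C\{v_k\}$. The display above reads $J\iota_k = \iota_k(P+\zeta^k Q)$, so $\iota_k$ conjugates $P+\zeta^k Q$ onto $J|_{W_k}$. Subtracting $\rho$ and iterating gives $(J-\rho I)^m\,\iota_k = \iota_k\,(P+\zeta^k Q - \rho I)^m$ for every $m$; since $\iota_k$ is injective, $(J-\rho I)^m(y\otimes v_k)$ vanishes exactly when $(P+\zeta^k Q-\rho I)^m y$ does. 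Hence $y\otimes v_k$ is a generalised eigenvector of $J$ of precisely the same rank (and, more generally, the Jordan structure of $J|_{W_k}$ matches that of $P+\zeta^k Q$).

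There is no serious obstacle here: the whole content is the mixed-product rule and the conjugacy it produces. The only points needing care are the sign convention for the eigenvalue of $A$ on $v_k$, so that $P+\zeta^k Q$ rather than $P+\zeta^{-k}Q$ appears, and the decision to phrase the generalised-eigenvector part through the intertwining isomorphism $\iota_k$ instead of a direct binomial expansion of $(J-\rho I)^m$, which would be needlessly messy.
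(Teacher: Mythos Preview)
Your argument is correct and matches the paper's proof essentially line for line: the paper applies the mixed-product rule to get $J(y\otimes v_k) = Py\otimes v_k + Qy\otimes \zeta^k v_k = ((P+\zeta^k Q)y)\otimes v_k$, exactly as you do. For the generalised eigenvectors the paper says only ``The case of a generalised eigenvector is similar,'' so your intertwining formulation via $\iota_k$ is a more explicit (and cleaner) version of what the paper leaves implicit.
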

\begin{proof}
Suppose that $(P+\zeta^kQ)y = \rho y$. Then
\beqn
J(y\otimes v_k) &=& (P\otimes I + Q\otimes A)(y\otimes v_k) \\
	&=& Py \otimes v_k + Qy \otimes Av_k \\
	&=& Py \otimes v_k + Qy \otimes \zeta^kv_k \\
	&=& (P+\zeta^kQ)(y\otimes v_k)
\eeqn
The case of a generalised eigenvector is similar.
\end{proof}

\begin{corollary}
\label{C:Jeigen}
The set of eigenvalues of $J$ is the union of the sets of eigenvalues of all
matrices $P+\zeta^kQ$ for $0 \leq k \leq n-1$.
\end{corollary}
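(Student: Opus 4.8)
The plan is to prove the two inclusions separately, invoking Lemma~\ref{L:PI+QA} for one direction and a block-diagonalisation of $J$ for the other. The forward inclusion is immediate from the Lemma: if $\rho$ is an eigenvalue of $P+\zeta^kQ$ with eigenvector $y$, then $y\otimes v_k$ is an eigenvector of $J$ with the same eigenvalue $\rho$, so every eigenvalue of every $P+\zeta^kQ$ is an eigenvalue of $J$.

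For the reverse inclusion I would exploit that the eigenvectors $v_0,\ldots,v_{n-1}$ of the circulant $A$ form a basis of $\C^n$, since they are associated with the distinct eigenvalues $\zeta^0,\ldots,\zeta^{n-1}$. This yields a direct-sum decomposition $\C^l\otimes\C^n=\bigoplus_{k=0}^{n-1}W_k$, where $W_k=\C^l\otimes\C\{v_k\}$ has complex dimension $l$. The computation in the proof of Lemma~\ref{L:PI+QA} already shows that each $W_k$ is $J$-invariant and that, under the identification $y\otimes v_k\leftrightarrow y$, the restriction $J|_{W_k}$ is precisely the matrix $P+\zeta^kQ$.

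Consequently $J$ is, in a suitable basis, block-diagonal with the $n$ blocks $P+\zeta^kQ$ $(0\le k\le n-1)$ down the diagonal. Taking characteristic polynomials then gives $\det(J-\lambda I_{ln})=\prod_{k=0}^{n-1}\det\bigl((P+\zeta^kQ)-\lambda I_l\bigr)$, so the roots of the left-hand side---the eigenvalues of $J$---are exactly the union of the roots of the factors, namely the eigenvalues of the $P+\zeta^kQ$. This establishes the reverse inclusion and hence the claimed set equality.

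I expect no serious obstacle. The only point needing a little care is the dimension bookkeeping guaranteeing that the $W_k$ exhaust all of $\C^{ln}$, so that no eigenvalue of $J$ is overlooked; this is why I pass through the full direct-sum decomposition rather than merely the invariant-subspace statement of the Lemma. The reference to generalised eigenvectors in Lemma~\ref{L:PI+QA} is not required for the present claim, since the characteristic-polynomial argument sidesteps any question of diagonalisability; it would matter only if one wished to track the Jordan structure or the algebraic multiplicities of the eigenvalues.
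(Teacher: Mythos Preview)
Your proposal is correct. The paper states the corollary without proof, treating it as an immediate consequence of Lemma~\ref{L:PI+QA}; your argument supplies exactly the details the paper suppresses, namely the decomposition $\C^l\otimes\C^n=\bigoplus_k W_k$ into $J$-invariant blocks on which $J$ acts as $P+\zeta^kQ$. The only minor divergence is that the paper's inclusion of the generalised-eigenvector clause in the Lemma hints at a dimension-counting route (the generalised eigenspaces of the $P+\zeta^kQ$, tensored with the $v_k$, fill out $\C^{ln}$), whereas you bypass this via the characteristic-polynomial factorisation; as you rightly observe, your route makes the generalised-eigenvector statement unnecessary for the present corollary.
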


Hopf bifurcation then occurs when $P+\zeta^kQ$ has a non-zero purely imaginary
eigenvalue. Since $l \geq 2$ this is always possible for suitable $P,Q$,
so the cases $k=0$ and $k=n/2$ ($n$ even) are no longer excluded.
Provided suitable nondegeneracy conditions hold, the oscillations
for $k=0$ are synchronous; for $k=n/2$, neighbouring nodes are
half a period out of phase; the rest are discrete rotating waves with the
same patterns as in the case $l=1$.

The strong constraints on the first bifurcation no longer apply.
For example, when $n=5$ and $l=1$ the only Hopf bifurcation that can be
the first occurs for $k=2$, and $k=1$ is not possible. When $l=2$, however,
the eigenvalues of $P+\zeta Q$ can be the first critical eigenvalues.
For example, suppose that
\[
Q = \Matrix{1 & 0 \\ 0 & 2} \qquad P = \Matrix{\lambda & 0 \\ 0 & \lambda}
\]
Then the eigenvalues of $\zeta^k Q$, numerically, are
\beqn
k=0 &&  0.5 + 2.39792\,\ii \qquad 0.5 - 2.39792 \,\ii  \\
k=1 && -2.12604 + 1.21652 \,\ii \qquad 2.43506 - 0.265468 \,\ii \\
k=2 && -1.81397 - 1.64606 \,\ii \qquad 1.00495 + 2.23385 \,\ii 
\eeqn
with largest real part $2.43506$ when $k=1$. Thus
the eigenvalues of $\lambda I + \zeta^k Q$ are these with $\lambda$ added,
and occur in the same order of real parts.
As $\lambda$ increases from (say) -3, the first bifurcation
occurs at $\lambda =- 2.43506$, which is a Hopf bifurcation
for the $k=1$ mode of oscillation.

In this example, {\em all} local bifurcations are Hopf.

\section{Symmetric Bidirectional Rings}
\label{S:SBR}

Any $\D_n$-symmetric bidirectional ring is a special case of
the general $\Z_n$-group network unidirectional ring with couplings of
all ranges, discussed in Section \ref{S:LRC}. To obtain $\D_n$
symmetry we require the
input arrows from nodes $i\pm j$ to node $i$ have the same
arrow-type, hence the same connection strength at criticality.

We can therefore read off the linear results---such as the
eigenvalues and eigenvectors---for bidirectional rings
from those for the general $\Z_n$-group network unidirectional ring.
In particular, in \eqref{E:uni_ring_lin} we now have $a_j=a_{n-j}$ for
$0 \leq j \leq n-1$. With 1-dimensional node spaces,
 the eigenvalues are now

\begin{equation}
\label{E:lambda_k_Dn}
\mu_k = a_0  + a_1 (\zeta^{k} + \zeta^{(n-1)k}) + a_2  (\zeta^{2k} +\zeta^{(n-2)k}) + \cdots 
\end{equation}
where now $k$ ranges from $0$ to $\lceil n/2 \rceil$.

All eigenvalues other than those for $k=0$ and $k=n/2$ when $n$ is even are now 
{\em double}. However, the Equivariant Hopf Theorem comes to our rescue
by splitting off simple eigenvalues for $\C$-axial subgroups. The main
new feature is the occurrence of new $\C$-axial subgroups, not generically present
for $\Z_n$ symmetry \cite{GS86,GS02,GS23}. For example, when $n=3$ these $\C$-axial subgroups
are $\Z_2$ subgroups generated by reflection in a symmetry axis, and the
oscillations have on of the following forms:
\beqn
x(t) &=& (u(t),u(t),v(t)) \\
x(t) &=& (u(t),u(t+\frac{T}{2}),v(t))
\eeqn
where $T$ is the period and $v(t)$ has period $\frac{T}{2}$. This called a
{\em multirhythm} state \cite{GS02, GS23, GSS88}.
Conditions for these states to be stable are stated in \cite{GS85,GSS88}
in terms of the Liapunov-Schmidt reduced function.

\subsection{Exotic Patterns}

In \cite[Lemma 4.6, Corollary 4.9]{AS07} it is proved that for 
$\Z_n$- and $\D_n$-symmetric rings with couplings of all ranges,
every balanced colouring is an orbit colouring; that is, determined by 
the fixed-point subspace of a subgroup of the symmetry group. 
The proof of Lemma 4.6 in that paper omits one case, as noted in
\cite[Proposition 26.7]{GS23}, but when this case is taken into account 
the result that every balanced colouring is an orbit colouring
in these networks remains valid \cite[Propositions 26.6, 26.7]{GS23}. 
However, special conditions on
symmetric ring networks can lead to different balanced colourings.
For example,
\cite{GNS04} observe that in a 12-node bidirectional ring with both nearest-neighour
and next-nearest-neighour connections, if these connections have the same arrow-type,
there exists an {\em exotic} synchrony pattern, not given by a subgroup of
the symmetry group $\D_{12}$. 

\begin{figure}[htb]
      \centerline{%
\includegraphics[width=.22\textwidth]{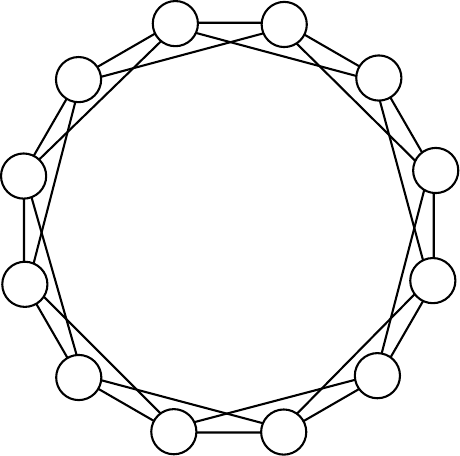} \qquad
\includegraphics[width=.22\textwidth]{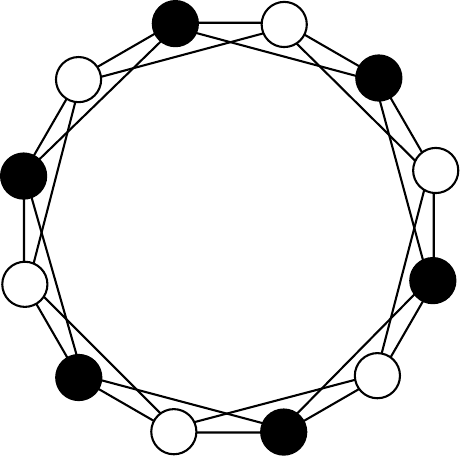} \qquad
\includegraphics[width=.22\textwidth]{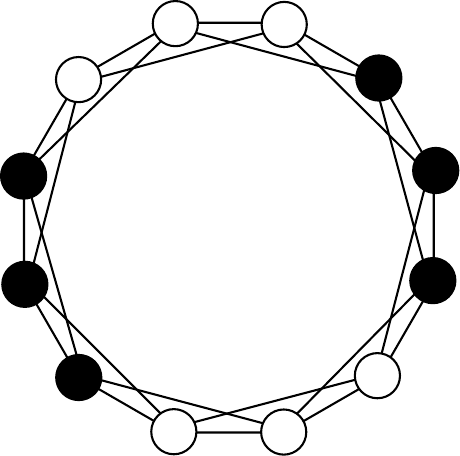} 
        }
        \caption{{\em Left}: 12-node bidirectional ring with NN and NNN 
        coupling, assumed identical. {\em Middle}: Orbit colouring for $v_6$.
        {\em Right}: A balanced colouring that is not an orbit colouring.
        } 
        \label{F:12ring_B}  
\end{figure}

Exotic phase patterns can also occur,
lifted from the corresponding quotient network. Here, the quotient has
$\Z_2$ symmetry. If node spaces have dimension 2 or more, there
can be oscillation patterns in which all nodes of the same colour are synchronous,
but the black nodes are a half period out of phase with the white nodes.
It is shown in \cite[Section 6]{AS06} that this pattern can occur stably as an equilibrium.

\subsection*{Acknowledgements}

I thank Mainak Sengupta for helpful discussions on symmetries of
electronic rectifier circuits. Some of the results in this paper were obtained
in collaboration with Luciano Buono,
Jim Collins, Benoit Dionne,
Marty Golubitsky, Martyn Parker, and David Schaeffer.

\end{document}